\documentclass[11pt]{article}

\usepackage[text={153mm,235mm},centering]{geometry}
\usepackage{enumerate}

\usepackage[mathscr]{eucal}
\usepackage[T1]{fontenc}
\usepackage[utf8]{inputenc}
\usepackage{lmodern}
\usepackage{amsmath,amssymb,amsthm,mathtools}
\usepackage{mathrsfs}
\usepackage{graphicx}
\usepackage[all]{xy}
\usepackage{enumitem}
\usepackage{hyperref}
\numberwithin{equation}{section}
\usepackage{tikz-cd}
\usepackage{stmaryrd}
\usepackage{microtype}
\usepackage{authblk}

\allowdisplaybreaks
\hypersetup{
  colorlinks=true,
  linkcolor=blue,
  citecolor=blue,
  urlcolor=blue
}

\newtheorem{theorem}{Theorem}[section]
\newtheorem{proposition}[theorem]{Proposition}
\newtheorem{corollary}[theorem]{Corollary}
\newtheorem{lemma}[theorem]{Lemma}
\newtheorem{conjecture}[theorem]{Conjecture}

\theoremstyle{definition}
\newtheorem{definition}[theorem]{Definition}
\newtheorem{remark}[theorem]{Remark}
\newtheorem{example}[theorem]{Example}

\newtheorem*{ack}{Acknowledgements}
\newtheorem*{convention}{Convention}

\newcommand{\kfield}{k}
\newcommand{\D}{\mathrm{D}}
\newcommand{\Perf}{\mathrm{Perf}}
\newcommand{\thick}{\mathrm{thick}}

\newcommand{\RHom}{\mathbf{R}\!\operatorname{Hom}}
\newcommand{\otL}{\otimes^{\mathbf L}}

\newcommand{\Adual}{A^{!}}
\newcommand{\Rep}{\mathrm{Rep}}

\newcommand{\ch}{\mathrm{ch}}

\title{Algebraic $K$-theory,
cohomotopy $K$-groups,
and Koszul duality
\footnotetext{Email: xjchen@scu.edu.cn, m.huang@newuu.uz, f.eshmatov@newuu.uz}}

\author[1,2]{Xiaojun Chen}

\author[1]{Farkhod Eshmatov}
\author[1]{Maozhou Huang}

\affil[1]{Department of Mathematics, New Uzbekistan University,
Tashkent, 100001 Uzbekistan}

\affil[2]{School of Mathematics, Sichuan University, Chengdu, Sichuan Province, 
610064 P.R. China}

\date{}

\begin{document}

\maketitle

\begin{abstract}
Let $A$ be an augmented differential graded 
algebra over a field $\kfield$ of 
characteristic zero, and let 
$\Adual=\RHom_A(\kfield,\kfield)$ be its 
Koszul dual algebra. 
Blumberg and Mandell showed that, under some finiteness conditions of $A$, the derived Koszul duality provides an equivalence between the $K$-theory $K(\thick_A(\kfield) )$ of the triangulated thick subcategory generated by $\kfield$ and the $K$-theory $K(\Adual)$ of the derived category of perfect $\Adual$-modules. 
Combining this equivalence 
with the Jones--Goodwillie Chern character and the Jones--McCleary isomorphism, we obtain that the $K$-groups $K_n(\thick_A(k))$ are a 
concrete candidate for Loday's conjectural 
 contravariant $K$-groups.

\noindent{\bf Keywords:} 
$K$-theory, cyclic homology, Koszul duality

\noindent{\bf MSC2020:} 19D10, 19D55, 16S37, 18F25

\end{abstract}

%\tableofcontents

\section{Introduction}\label{sect:intro}

Let $A$ be an augmented differential graded 
algebra over a field $\kfield$ of 
characteristic zero. We write $K_n(A)$ for the 
$n$-th algebraic $K$-group of $A$, defined as the 
$K$-group of a Waldhausen model for the derived category of perfect $A$-modules. There is a natural 
map, the Jones--Goodwillie Chern character
(see Goodwillie \cite{Goodwillie}
and McCarthy \cite{McCarthy}),
\[
\ch_n:K_n(A)\longrightarrow HC^{-}_n(A),
\]
from algebraic $K$-theory to negative cyclic 
homology.

In \cite[\S 11.4.4]{Loday}, Loday suggested that one should look for a contravariant version of algebraic $K$-theory, denoted informally by $K^n(A)$, together with a Chern character to cyclic cohomology and a pairing compatible with the canonical pairing between cyclic cohomology and negative cyclic homology. 
For Banach algebras, this 
philosophy 
is realized by Connes through Kasparov's 
$KK$-theory (see Connes 
\cite{Connes}). However, outside the Banach setting, to the best of the authors’ knowledge, a satisfactory general definition of 
a ``cohomotopical'' $K$-theory is not yet 
available.

In this note, we explain that, for dg algebras with certain finiteness conditions, 
the derived Koszul duality provides a natural 
candidate $K_{\mathrm{Kos}}(A)$ for
the contravariant $K$-theory.
More precisely, let \[\Adual:=\RHom_A(\kfield,\kfield)\] be the Koszul dual algebra of $A$, 
which is identified with
the linear dual of
the bar construction $B(A)$. 
Then the Koszul duality functors (see Beilinson-Ginzburg-Soergel
\cite{BGS})
$$
\mathbf{R}\mathrm{Hom}_A(-,\kfield):
\D(A)\rightleftarrows \D(\Adual) : 
\RHom_{\Adual}(-,\kfield)
$$
restrict to equivalences inverse to each other
\[
\thick_A(\kfield)
\simeq 
\Perf(\Adual).
\]
This implies an identification of 
$K$-theory spectra (see Blumberg-Mandell
\cite{BM})
\[
K_{\mathrm{Kos}}(A)\coloneqq K(\thick_A(\kfield))\simeq K(\Adual).
\]
On the other hand,
the famous result of
Jones and McCleary (see \cite{JM})
says that there is an isomorphism
\[
HC^{-}_\bullet(\Adual)\cong HC^{\bullet}(A).
\]
Consequently, the ordinary Jones--Goodwillie Chern character for $\Adual$ transports to a map from the $K$-theory of $\thick_A(\kfield)$ to the cyclic cohomology of $A$.

The main point can be summarized as follows, which should be read as an expository package 
rather than as a new result.

\begin{theorem}\label{thm:intro-main}
Assume that $A$ is an augmented dg algebra 
over $\kfield$ which is weakly Adams connected
(see
Definition~\text{\rm{\ref{def:finite}}} below).
Then:
\begin{enumerate}[label=\textup{(\alph*)},noitemsep,leftmargin=*,nosep]
\item there is an equivalence 
$
\thick_A(\kfield)\simeq \Perf(\Adual),
$
and consequently
\[
K^n_{\mathrm{Kos}}
(A) 
=K_n(\thick_A(\kfield))
\cong K_n(\Adual)
\quad \text{for}\quad n\ge 0;
\]

\item via the isomorphism
$HC^{-}_n(\Adual)\cong HC^n(A)$ for
$n\in\mathbb Z$,
the composite
    \[
   K^n_{\mathrm{Kos}}(A)= K_n (\thick_A(\kfield) )
    \xrightarrow{\ \sim\ } K_n(\Adual)
    \xrightarrow{\ch_n} HC^{-}_n(\Adual)
    \xrightarrow{\ \sim\ } HC^n(A)
    \]
defines a natural Chern character
    \[
    \ch_n^{\vee}:K_n (\thick_A(\kfield) )
    \longrightarrow HC^n(A)\quad\text{for}\quad n\in \mathbb Z.
    \]
\end{enumerate}
\end{theorem}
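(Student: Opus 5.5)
The plan is to assemble the statement from three ingredients: derived Koszul duality, invariance of $K$-theory under equivalences of Waldhausen/dg categories, and the Jones--McCleary isomorphism. The Chern character itself is then just a composite.

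For part (a), I will work with the dg functor $F=\RHom_A(-,\kfield)\colon \D(A)^{\mathrm{op}}\to \D(\Adual)$, modelled concretely by $\mathrm{Hom}_A(P,\kfield)$ for a cofibrant (semifree) resolution $P$. First, $F(\kfield)=\Adual$, so $F$ sends $\kfield$ to the free module of rank one. Next, I will show that the induced map
\[
\RHom_A(X,Y)\to \RHom_{\Adual}(F Y,F X)
\]
is a quasi-isomorphism for $X=Y=\kfield$. Both sides are $\Adual$; this is a tautology up to checking that the map is the identity of $\Adual=\mathrm{End}_A(\kfield)$. Since both sides are exact in each variable, a five-lemma/thick-subcategory induction extends full faithfulness to all of $\thick_A(\kfield)$. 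The essential image is then the thick subcategory generated by $\Adual$, namely $\Perf(\Adual)$. The contravariance is harmless: $\thick_A(\kfield)^{\mathrm{op}}\simeq\Perf(\Adual)$, and $K$-theory of a stable category agrees with that of its opposite.

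Applying the Blumberg--Mandell invariance of $K$-theory under DK-equivalences of pretriangulated dg categories (equivalently, exact equivalences of the underlying Waldhausen categories inducing equivalences of homotopy categories) then gives $K(\thick_A(\kfield))\simeq K(\Adual)$, and hence the stated isomorphism on $K_n$. Where the weak Adams connectedness of Definition~\ref{def:finite} enters is in making the inverse functor $\RHom_{\Adual}(-,\kfield)$ send $\Adual$ back to $\kfield$, i.e.\ the biduality $A\simeq (\Adual)^!$ up to the relevant completion. This is used only to confirm that the two Koszul functors are mutually inverse, as the statement asserts.

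For part (b), I will invoke the Jones--McCleary isomorphism $HC^-_\bullet(\Adual)\cong HC^\bullet(A)$ under the same finiteness hypothesis. The mechanism is that $\Adual\simeq B(A)^\vee$, and the weight/Adams grading makes the Hochschild complex of $\Adual$ the graded dual of that of $A$, compatibly with Connes' operator $B$. Dualizing the $\kfield[[u]]$-module structure exchanges $HC^-$ with cyclic cohomology, and the finiteness (degreewise finite in each Adams weight) ensures that dualization commutes with the products and sums in the negative cyclic complex. The Chern character $\ch_n^\vee$ is then defined as the displayed composite, with the Goodwillie--McCarthy $\ch_n$ for $\Adual$ in the middle. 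Naturality follows because each of the three maps is natural for morphisms of augmented weakly Adams connected dg algebras $A\to A'$. Such a morphism induces $\Adual{}'\to\Adual$ and restriction on thick subcategories compatibly, so all three squares commute.

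The main obstacle is this compatibility of dualization with the completed/product structure in $HC^-$. It is here that weak Adams connectedness must be genuinely used, since without weight-wise finiteness $HC^-(\Adual)$ and the dual of $HC(A)$ differ by a $\lim^1$ or completion term. I would handle it by filtering by Adams weight and checking the isomorphism weight by weight, where everything is finite-dimensional.
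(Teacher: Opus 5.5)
Your proposal is correct. Part (b) follows the paper: Jones--McCleary is used as a black box, $\ch_n^{\vee}$ is the composite, and naturality is checked map by map. For (a) you take a different route.

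\textbf{Comparison for (a).} The paper gets $\thick_A(\kfield)\simeq\Perf(\Adual)$ by citing \cite{LPWZ} under weak Adams connectedness (Theorem~\ref{thm0:koszul-equivalence}). It then passes to $K$-theory in two steps:
\begin{itemize}
\item it shows that $\RHom_A(-,\kfield)$ induces equivalences on the homotopy categories generated by all the iterated $S_\bullet$-constructions (Corollary~\ref{cor:twoequiv});
\item it applies Blumberg--Mandell's lemma for contravariant functors that send cofibrations to fibrations.
\end{itemize}
You instead prove the triangulated equivalence directly by the generator/d\'evissage argument. You have $F(\kfield)=\Adual$, full faithfulness on the generator is tautological and extends to the thick closure, and the essential image is thick because summands split inside $\thick_A(\kfield)$. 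You then handle contravariance with $K(\mathcal C^{\mathrm{op}})\simeq K(\mathcal C)$ for pretriangulated categories, together with dg invariance of $K$-theory.

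What each approach buys:
\begin{itemize}
\item Yours is more elementary. It shows that (a) holds for \emph{every} augmented dg algebra over $\kfield$, so the hypothesis is only needed in (b).
\item The paper's route also yields the companion equivalence $\Perf(A)\simeq\thick_{\Adual}(\kfield)$, which does need biduality and $\kfield\in\Perf(A)$. It also keeps contravariance inside the Waldhausen framework, without invoking the opposite-category identification.
\end{itemize}

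\textbf{A correction to your commentary.} Weak Adams connectedness is not what makes $\RHom_{\Adual}(-,\kfield)$ send $\Adual$ back to $\kfield$. The isomorphism $\RHom_{\Adual}(\Adual,\kfield)\simeq\kfield$ is tautological, so the two Koszul functors are mutually inverse between $\thick_A(\kfield)$ and $\Perf(\Adual)$ with no hypothesis at all. The biduality $A\simeq(\Adual)^!$ of Theorem~\ref{thm:doublecent} concerns $\RHom_{\Adual}(\kfield,\kfield)$, and its real role here is in (b). Proposition~\ref{prop:JM} is stated as $HC^-_n(A)\cong HC^n(\Adual)$. To get the direction the theorem uses, $HC^-_n(\Adual)\cong HC^n(A)$, one applies it with $\Adual$ in place of $A$ and then invokes biduality.

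Your direct dualization of Hochschild complexes avoids that step, but it needs finiteness in each \emph{bidegree}, not merely in each Adams weight. A weakly Adams connected algebra can have trivial Adams grading, so ``checking weight by weight'' should read ``bidegree by bidegree''. That bidegree-wise finiteness is exactly what condition (a) of Definition~\ref{def:finite} supplies.
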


\begin{remark}The condition for $A$
is moderate. Almost all interesting examples
of Koszul algebras are weakly Adams connected;
in fact, they are even Adams connected, in
the sense of Definition~\ref{def:finite}
(see Lemma \ref{lem:comparison} for relations of these two notions).
Also, Theorem \ref{thm:intro-main}
holds for $A_\infty$-algebras, as studied in
\cite{LPWZ}.
\end{remark}

\begin{example}[The classical Koszul algebra]
Suppose $A$ is a Koszul or linear Koszul algebra in
the usual sense (see, for example,
\cite{BGS} and \cite{LV}), that is, the underlying
space of $A$ is $TV/(R)$, where $V$ is a 
finite-dimensional vector space, $TV$ is 
the free associative algebra generated by 
$V$, and $(R)$ is the two-sided ideal
generated by a subspace $R\subset V\otimes V$.
If we equip elements in $V$ with
Adams grading $1$, then $A$ is Adams
connected and hence strongly locally finite.
Thus Theorem~\ref{thm:intro-main} holds for all these Koszul algebras.
\end{example}

The note is organized as follows. In 
Section~\ref{sec:$K$-theory} we recall the 
Waldhausen model of $K$-theory that we use for 
dg algebras. Section~\ref{sec:cyclic} recalls 
cyclic homology, cyclic cohomology, their 
natural pairing, and the Jones--Goodwillie 
Chern character. Section~\ref{sec:koszul} 
contains the Koszul duality arguments. In 
Section~\ref{sec:loday} we formulate the 
resulting candidate for Loday's contravariant $K$-groups and discuss their relation with the category of finite-dimensional modules.

\begin{convention}
Throughout the paper,
$\kfield$ is a field of characteristic zero;
all dg algebras are unital, associative, and augmented over $\kfield$;
all modules are right dg modules unless explicitly stated otherwise.
Given a dg algebra $A$,
$\D(A)$ denotes the derived category of dg right $A$-modules.
\end{convention}

\begin{ack}%{Acknowledgements}

We would like
to thank Leilei Liu and Jieheng Zeng
for several helpful communications.
This work is supported by 
NSFC No. 12271377 and 12261131498.
\end{ack}

\section{Algebraic $K$-theory of dg algebras}\label{sec:$K$-theory}

In this section we recall the version of 
Waldhausen $K$-theory needed later. We do not 
attempt to work at maximum generality; the 
discussion is tailored to perfect dg modules 
over a dg algebra.
The interested reader may refer to
\cite{Waldhausen,Weibel} for more details.

\subsection{Waldhausen categories and the 
iterated $S_\bullet$-construction}

%We assume that the readers are familiarwith Waldhausen categories. 
Briefly, a Waldhause category 
is a category
together with two families of morphisms,
called cofibrations and weak equivalences,
satisfying several conditions.
Given a Waldhausen category,
Waldhausen introduced its
algebraic $K$-theory as follows.

Let $\operatorname{Ar}[n]$ be the category whose objects are pairs $(i,j)$ with $0\le i\le j\le n$ and whose morphisms are the obvious order-preserving maps.
For the category $\operatorname{Ar}[n_1,\ldots,n_q] \coloneqq \operatorname{Ar}[n_1]\times\cdots\times \operatorname{Ar}[n_q]$ and a functor $A:\operatorname{Ar}[n_1,\ldots,n_q] \to \mathcal{C},$ write $A_{i_1,j_1;\ldots;i_q,j_q} \coloneqq A((i_1,j_1),\ldots,(i_q,j_q)).$ 
For a Waldhausen category $\mathcal C$, let $S^{(q)}_{n_1,\ldots,n_q}\mathcal{C}$ denote the full subcategory consisting of functors in $\operatorname{Fun}(\operatorname{Ar}[n_1,\ldots,n_q],\mathcal C)$ satisfying the following conditions:
\begin{enumerate}[label=\textup{(\arabic*)},noitemsep,leftmargin=*,nosep]
\item $A_{i_1,j_1;\ldots;i_q,j_q} = \ast$ if some $i_k = j_k$.
\item Every map $A_{0,i_1;\ldots;0,i_q} \to A_{0,j_1;\ldots;0,j_q}$ is a cofibration. 
% the canonical map induced by the pushout $A_{0,i_1;\ldots;0,i_r+1;\ldots;0,i_q}\cup_{A_{0,i_1;\ldots;0,i_q}}A_{0,i_1;\ldots;0,i_s+1;\ldots;0,i_q}\to A_{0,i_1;\ldots;0,i_r+1;\ldots;0,i_s+1;\ldots;i_q,j_q}$ is a cofibration; 
\item For integers $1\leq r_1\leq \cdots \leq r_m\leq n$ and the cubic diagram given by
\begin{equation*}\begin{array}{cl}
&A((0,i_1),\ldots,(0,i_{r_1}+j_{r_{1}}),\ldots,(0,i_{r_2}+j_{r_{2}}),\ldots,(0,i_{r_m}+j_{r_{m}}),\ldots,(0,i_{n}))\\
&\text{for all }j_{r_1},\ldots,j_{r_m}=0,1\text{ except for }j_{r_1}= \cdots =j_{r_m} = 1,
\end{array}
\end{equation*}
the induced map from the colimit over this diagram to \[%A_{0,i_1;\ldots;0,i_{r_1}+1;\ldots;0,i_{r_2}+1;\ldots;0,i_{r_m}+1;\ldots;0,i_{n}}
A((0,i_1),\ldots,(0,i_{r_1}+1),\ldots,(0,i_{r_2}+1),\ldots,(0,i_{r_m}+1),\ldots,(0,i_{n}))
\] is a cofibration.
\item For any $1\leq r\leq q$ and $j_r\leq k\leq n_r$, the following induced map is isomorphic \[A_{i_1,j_1;\ldots;i_r,k;\ldots;i_q,j_q}/A_{i_1,j_1;\ldots;i_r,j_r;\ldots;i_q,j_q}\to A_{i_1,j_1;\ldots;j_r,k;\ldots;i_q,j_q}.\] 
\end{enumerate}
The categories $S^{(q)}_{n_1,\ldots,n_q}\mathcal C$ for all $n_1,\ldots,n_q$ assemble into a multisimplicial category $S^{(q)}_{\bullet,\ldots,\bullet}\mathcal C$, which is a Waldhausen category. For the subcategory $wS^{(q)}_{\bullet,\ldots,\bullet} \mathcal C$ of weak equivalences, the \emph{algebraic $K$-theory spectrum} $K(\mathcal C)$ of $\mathcal C$ has $q$-th space $|wS^{(q)}_{\bullet,\ldots,\bullet} \mathcal C|.$
Write $S_{\bullet}\mathcal{C} = S^{(q)}_{\bullet,\ldots,\bullet}\mathcal{C}$ for $q = 1$.
The $K$-groups of $\mathcal C$ are the homotopy groups $K_n(\mathcal C)\coloneqq \pi_{n+1}|wS_\bullet\mathcal C|.$

\subsection{Dg modules}

Let $A$ be a dg algebra and $\mathsf{Mod}\,A$ the 
category of dg right $A$-modules. 
We equip $\mathsf{Mod}\,A$ with the 
projective model structure: weak 
equivalences are 
quasi-isomorphisms, fibrations are 
degreewise surjections, and 
cofibrations are injective 
maps with dg projective cokernels. 
The derived category
of $\mathsf{Mod}\,A$
is the homotopy category
of $\mathsf{Mod}\,A$ localized
at quasi-isomorphisms, and
is denoted by $\mathrm{D}(A)$.
Given a dg $A$-module $M$, let $\mathrm{thick}_A(M)$ be the full subcategory of $\mathrm{D}(A)$ generated by $M$ that is closed under shifts and summands.

We have a Waldhausen category 
which models 
$\mathrm{thick}_A(M)$
in $\mathrm{D}(A)$ as follows:
it is 
the full subcategory of
$\mathsf{Mod}\,A$
consisting of cofibrant
dg $A$-modules whose
images in $\mathrm{D}(A)$
lie in $\mathrm{thick}_A(M)$.
In particular,
if $M=A$, we write
$\mathrm{thick}_A(A)$
as $\mathrm{Perf}(A)$, which is also the full triangulated subcategory of compact objects in $\D(A)$, and
the associated Waldhausen category
as $\mathcal M^c_A$;
if $M$ is the augmentation
module $k$, we write
the Waldhausen
category associated to
$\mathrm{thick}_A(k)$
as $\mathcal M_A(k)$.

\begin{definition}[{cf. 
\cite[Definition 4.4]{BM}}]
Given a dg algebra $A$,
\begin{enumerate}[label=\textup{(\arabic*)},noitemsep,leftmargin=*,nosep]
\item
the algebraic $K$-theory spectrum of $A$ is
\[
K(A):=K(\mathcal M^c_A)
\]
and its homotopy groups are denoted by 
$K_n(A)$,
and

\item the algebraic
$K$-theory spectrum of $\mathrm{thick}_A(k)$
is 
$$K(\mathrm{thick}_A(k)):=
K(\mathcal M_A(k))$$
and its homotopy groups are denoted by 
$K_n(\mathrm{thick}_A(k))$.
\end{enumerate}
\end{definition}

The above definition (1) 
agrees with the usual algebraic 
$K$-theory of a ring when $A$ is 
concentrated in degree zero, where
Quillen's and 
Waldhausen's constructions coincide.

\section{Cyclic homology, cyclic cohomology, 
and the Chern character}\label{sec:cyclic}

We collect some notions 
and facts about cyclic homology,
cyclic cohomology, and their pairings.
Details can be found in 
\cite{Goodwillie,Jones1987,Loday}.

\subsection{Mixed complexes}

Let $A$ be a dg algebra. Its normalized Hochschild 
chain complex is denoted by $C_\bullet(A)$, with 
Hochschild differential $b$ and Connes operator $B$. 
Together they form a mixed complex 
$(C_\bullet(A),b,B)$.

\begin{definition}
Let $u$ be a formal variable of degree $-2$. 
The \emph{negative cyclic complex}, 
\emph{periodic cyclic complex}, and 
\emph{cyclic complex} of $A$ are the complexes
\begin{align*}
CC^-_\bullet(A)&:=(C_\bullet(A)\llbracket 
u\rrbracket,b+uB),\\
CC^{\mathrm{per}}_\bullet(A)
&:=(C_\bullet(A)\llbracket 
u,u^{-1}\rrbracket,b+uB),\\
CC_\bullet(A)&:= (C_\bullet(A)\llbracket 
u,u^{-1}\rrbracket/uC_\bullet(A)\llbracket 
u\rrbracket,b+uB ).
\end{align*}
Their homology groups are denoted by 
$HC^-_\bullet(A)$, 
$HC^{\mathrm{per}}_\bullet(A)$, and 
$HC_\bullet(A)$ respectively.
\end{definition}

Dually, let 
$C^\bullet(A)=\operatorname{Hom}_{\kfield}
(C_\bullet(A),\kfield)$ be the normalized 
Hochschild cochain complex, with differentials 
$b^*$ and $B^*$ dual to $b$ and $B$.
Here we only recall the definition of cyclic
cohomology.

\begin{definition}
Let $v$ be a formal variable of degree $2$. 
The \emph{cyclic cochain complex} of $A$ is
\[
CC^{\bullet}(A):=(C^{\bullet}(A)\llbracket 
v\rrbracket,
b^*+vB^*).
\]
Its cohomology is the \emph{cyclic cohomology} 
of $A$, denoted by $HC^{\bullet}(A)$.
\end{definition}

\begin{remark}
When one works systematically with mixed 
complexes, all of these constructions are 
functorial. In particular, if $A$ and $A'$ are 
quasi-isomorphic as dg algebras, then their 
Hochschild, cyclic, and negative cyclic 
theories are canonically isomorphic.
\end{remark}

\subsection{The natural pairing}

There is a canonical pairing between cyclic
cochains and negative 
cyclic chains. At the 
chain level it is given by
\[
 \langle 
\sum_{j\ge 0}\varphi_j v^j,\,
\sum_{i\ge 0}x_i u^i
 \rangle
:=\sum_{i\ge 0}\varphi_i(x_i).
\]
A direct check shows that this is compatible 
with the differentials $b^*+vB^*$ and $b+uB$. 
Hence it descends to a pairing
\begin{equation}\label{eq:cyclic-pairing}
\langle -, -\rangle_{\mathrm{cyc}}:
HC^n(A)\times HC_n^{-}(A)\longrightarrow 
\kfield.
\end{equation}
This is the pairing used in Loday's 
formulation of the conjectural compatibility 
between homotopical and cohomotopical $K$-theories.

\subsection{The Jones--Goodwillie Chern character}

For a dg algebra $A$, the Chern character from its 
algebraic $K$-theory to the negative cyclic homology 
can be defined in several equivalent ways.
We use McCarthy's 
extension of Hochschild and cyclic theory to a
Waldhausen category, which is 
$\mathcal M_{A}^{c}$ in this paper 
(\cite[\S4]{McCarthy}).

\begin{theorem}[Jones--Goodwillie; 
McCarthy]\label{thm:JG}
Let $A$ be a dg algebra over $\kfield$. There 
is a natural map
from the $K$-theory spectrum
to the negative cyclic spectrum:
\[
\ch:K(A)\longrightarrow HC^{-}(A),
\]
whose homotopy groups give graded maps
\[
\ch_n:K_n(A)\longrightarrow HC^-_n(A),\qquad 
n\ge 0.
\]
These maps are functorial
and agree with the classical Chern character 
for ordinary algebras.
\end{theorem}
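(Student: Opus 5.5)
The statement is the Jones--Goodwillie--McCarthy Chern character $K(A)\to HC^-(A)$, which the paper quotes rather than proves. I would construct it in three stages: a trace from $K$-theory to topological Hochschild homology, a circle-equivariant lift of that trace, and a rationalization step. The key inputs are McCarthy's cyclic theory of exact and Waldhausen categories \cite{McCarthy} and Goodwillie's rational comparison \cite{Goodwillie}.

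\emph{Step 1: cyclic nerves and Morita invariance.} Form the cyclic nerve of the dg category $\mathcal M^c_A$, applied levelwise to the $S_\bullet$-construction. This gives Waldhausen-categorical Hochschild homology $HH(S^{(q)}_{\bullet}\mathcal M^c_A)$. It is a cyclic object, so its realization carries a circle action, and taking homotopy fixed points yields negative cyclic homology $HC^-(S^{(q)}_\bullet\mathcal M^c_A)$. McCarthy's agreement theorem states that the inclusion of the one-object category $\{A\}$ into $\mathcal M^c_A$ induces an equivalence on Hochschild homology, hence on $HC^-$. Applied to each level of the iterated $S_\bullet$-construction, this shows that the spectrum $q\mapsto |HC^-(S^{(q)}_\bullet\mathcal M^c_A)|$ is a delooping of $HC^-(A)$.

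\emph{Step 2: the trace map.} Map the objects of $wS^{(q)}_\bullet\mathcal M^c_A$ into the zeroth simplicial level of the cyclic nerve by $M\mapsto \mathrm{id}_M$; more precisely, send a weak equivalence to the corresponding cyclic $1$-chain, and extend this to higher simplices. The identities are invariant under the cyclic operator, so the map factors through the circle-homotopy fixed points. Assembling over $q$ gives a map of spectra
\[
K(\mathcal M^c_A)\to HC^-(\mathcal M^c_A)\simeq HC^-(A).
\]
Taking $\pi_n$ produces $\ch_n$.

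\emph{Step 3: functoriality and comparison.} Functoriality follows because every construction above is natural in exact functors of Waldhausen categories. Extension of scalars along a dg algebra map is such a functor. For an ordinary algebra, the trace restricted to $BGL(A)^+$ is the classical Dennis trace refined to $HC^-$. This identifies $\ch_n$ with Goodwillie's and Jones's Chern character.

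The \textbf{main obstacle} is Step 2: making the circle-equivariant lift rigorous. A strict cyclic-object structure on the trace must be checked in a way that is compatible with the Waldhausen weak equivalences. I would follow McCarthy's device: replace $wS_\bullet$ by the equivalent nerve of isomorphisms of cofibrant objects, use the cyclic bar construction, and invoke the fact that a map out of a trivial cyclic object factors through homotopy fixed points.
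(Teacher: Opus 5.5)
The paper does not prove this theorem; it only cites McCarthy. Your outline follows that route, but two load-bearing steps are not right as written.

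\textbf{Step 1: the delooping.} The delooping does not come from agreement. Agreement (Morita invariance) only identifies $HH(\mathcal M^c_A)$ with $HH(A)$. It says nothing about how $HH(S_n\mathcal M^c_A)$ relates to $HH(\mathcal M^c_A)$. What makes $q\mapsto|HC^-(S^{(q)}_\bullet\mathcal M^c_A)|$ an $\Omega$-spectrum is McCarthy's \emph{additivity} theorem, $HH(S_n\mathcal C)\simeq HH(\mathcal C)^{\oplus n}$. In the dg setting this amounts to $HH$ of upper triangular matrix algebras over $A$ splitting. Additivity gives $HH(\mathcal C)\simeq\Omega|HH(S_\bullet\mathcal C)|$, and the same holds for $HC^-$ because homotopy fixed points commute with finite products. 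Agreement is then used once, to identify the bottom level with $HC^-(A)$.

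\textbf{Step 2: non-invertible weak equivalences.} This is the more serious problem: Step 2 does not handle weak equivalences that are not invertible.
\begin{itemize}
\item A quasi-isomorphism $f\colon M\to N$ is not a cyclic $1$-chain. Cyclic $1$-chains are pairs $M\to N\to M$, and the Dennis-trace formula uses $f^{-1}$.
\item The object-level trace $M\mapsto\mathrm{id}_M$ lives on $|\mathrm{ob}\,S_\bullet\mathcal C|$. That space models $K$-theory only when the weak equivalences are isomorphisms, via Waldhausen's $|iS_\bullet\mathcal C|\simeq|\mathrm{ob}\,S_\bullet\mathcal C|$.
\item Your remedy, replacing $wS_\bullet$ by the isomorphism nerve, is not a formal equivalence. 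For bounded complexes of finite-dimensional $\kfield$-vector spaces, using isomorphisms instead of quasi-isomorphisms changes $K_0$ from $\mathbb Z$ to $\bigoplus_{n\in\mathbb Z}\mathbb Z$.
\item For the large category $\mathcal M^c_A$ such a comparison can be obtained. One route is Waldhausen's fibration theorem for isomorphisms $\subset$ quasi-isomorphisms, plus an Eilenberg swindle killing $K$ of the contractible objects. But that is an argument you would have to supply.
\end{itemize}

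The more standard fix goes as follows. The $m$-simplices of the nerve of $wS_n\mathcal C$ are the objects of $S_n(w_m\mathcal C)$, where $w_m\mathcal C$ is the category of strings of $m$ weak equivalences. Apply the object-level trace in each degree $m$. Then prove that the degeneracy $\mathcal C\to w_m\mathcal C$ induces an equivalence on $HH$, hence on $HC^-$.

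This uses that weak equivalences between cofibrant dg modules are chain homotopy equivalences. Some care about cofibrancy of the strings is needed (for example, working with strings of acyclic cofibrations), so that the degeneracy is a quasi-equivalence of dg categories. This invariance is the key point in extending the trace from exact to Waldhausen categories, and it is what your sketch is missing.
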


For our purposes, it is enough to know that 
the Chern character exists and is functorial,
which will be used in Section~\ref{sec:loday}.

\section{Derived Koszul duality}\label{sec:koszul} 

In this section, we collect
some facts on Koszul duality,
which we learned from 
\cite{BGS,BM,DGI,LPWZ}.
We take the framework of
Lu, Palmieri, Wu and Zhang \cite{LPWZ}.

\subsection{The Koszul dual algebra}

\emph{From now on}, we assume that $A$ is bigraded: for any $a\in A$, we write
$$\mathrm{deg}\, a=(\mathrm{deg}_1(a),
\mathrm{deg}_2(a))
\in\mathbb Z\times\mathbb Z,$$
where the first grading is called
the homological grading, and the second
is called the Adams grading. We write
$A=\bigoplus_{i,j}A^i_j$,
where $A^i_j$ is the degree $(i,j)$
component.
The differential has degree $(-1,0)$.
The Koszul sign rule is given as follows:
if we switch two elements with grading
$(i,s)$ and $(j,t)$, there is
a sign $(-1)^{ij}$.

\begin{definition}[{\cite[Definition 2.1]{LPWZ}}]
\label{def:finite}
Suppose $A$ is a dg algebra.
Let $I=\bigoplus_{i,j} I^i_j$ be its augmentation ideal.
\begin{enumerate}[label=\textup{(\arabic*)},noitemsep,leftmargin=*,nosep]
\item
$A$ is called \emph{strongly locally finite}
if $I$ satisfies the following:
\begin{enumerate}[label=\textup{(\alph*)},noitemsep,leftmargin=*,nosep]
\item Each
bihomogeneous component $I^i_j$ is finite dimensional;

\item $I_j^*=\bigoplus_i I^i_j=0$ either for all $j\le 0$ or for all $j\ge 0$;

\item For all $j$, $I_j^i=0$ either
for all $i$ big enough or small enough.
\end{enumerate}

\item $A$ is called \emph{Adams connected}
if $A$ is, with respect to the Adams grading,
either positively or negatively connected
graded and locally finite. In other words,
\begin{enumerate}[label=\textup{(\alph*)},noitemsep,leftmargin=*,nosep]
\item $I_j^*$ is finite-dimensional for all $j$;
\item $I_j^*=0$ either for all $j\le 0$ or for all
$j\ge 0$.
\end{enumerate}

\item $A$ is called \emph{weakly Adams connected}
if
\begin{enumerate}[label=\textup{(\alph*)},noitemsep,leftmargin=*,nosep]
\item the underlying space of the reduced
Hochschild complex $C_\bullet(A)$ is locally finite;
\item the only simple dg $A$-modules are $k$ and its
shifts;
\item $A$ is an inverse limit of a family of
finite-dimensional left dg $A$-bimodules.
\end{enumerate}
\end{enumerate}
\end{definition}

A bigraded algebra has a finer structure
than just viewing the algebra with
the homological grading.
For example, the polynomial algebra $k[x]$ is concentrated in degree $0$ and hence is not locally finite with respect to the homological grading; however, it is Adams connected (and hence locally finite) if we assign the Adams grading of $x$ to be $1$.
All results in the previous sections
remain true in this bigraded setting.

Recall that for an augmented dg algebra 
$A$, its \emph{Koszul dual coalgebra},
denoted by $A^{\textup{!`}}$,
is the bar construction 
$\mathsf{B}(A)$
of $A$, which is a dg
coalgebra, and
the \emph{Koszul dual algebra}
$A^!$
of $A$ is the linear
dual of $A^{\textup{!`}}$,
with the induced dg algebra
structure.
In fact,
$A^!$ is a dg algebra model
for
$\RHom_A(\kfield,\kfield)$.

A comparison of the 
notions in Definition
\ref{def:finite}, and
their relations to
Koszul duality, are given by the 
following.

\begin{lemma}[\text{\cite[Lemma~2.2]{LPWZ}}]
\label{lem:comparison}
For a dg algebra $A$, we have
\begin{enumerate}[label=\textup{(\arabic*)},noitemsep,leftmargin=*,nosep]
    \item Adams connected $\Rightarrow$ strongly locally finite $\Rightarrow$ weakly Adams connected;
    \item $A$ is weakly Adams connected $\Rightarrow$ $A^!$ is locally finite;
    \item $A$ is strongly locally finite $\Rightarrow$ $A^!$ is strongly locally finite;
    \item $A$ is Adams connected $\Rightarrow$ $A^!$ is Adams connected.
\end{enumerate}
\end{lemma}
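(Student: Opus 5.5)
The plan is to treat the four implications separately. Each is a bookkeeping argument with bigradings on $I$, on the bar construction $\mathsf B(A)=\bigoplus_{n\ge0}(sI)^{\otimes n}$, and on its graded dual $A^!$. Recall that $sI$ shifts the homological degree by one and keeps the Adams degree. So a tensor $sa_1\otimes\cdots\otimes sa_n$ with $a_r\in I^{i_r}_{j_r}$ has bidegree $(\sum i_r+n,\ \sum j_r)$. The dual $A^!$ has the negated bidegrees, componentwise $(A^!)^{i}_{j}=((\mathsf B A)^{-i}_{-j})^*$. All finiteness questions about $A^!$ therefore reduce to finiteness of the bihomogeneous pieces of $\mathsf B(A)$, together with vanishing ranges.

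For (1), Adams connected $\Rightarrow$ strongly locally finite is immediate. Condition (a) follows because $I^i_j\subset I^*_j$, which is finite dimensional. Condition (b) is shared by both definitions. Condition (c) holds because a finite-dimensional $I^*_j$ has only finitely many nonzero $I^i_j$. For strongly locally finite $\Rightarrow$ weakly Adams connected, assume WLOG that $I^*_j=0$ for $j\le0$. Then in a fixed Adams degree $j$ the number $n$ of tensor factors in $C_\bullet(A)=A\otimes (sI)^{\otimes\bullet}$ is at most $j$. Each factor has Adams degree in $[1,j]$, so for fixed total bidegree only finitely many multidegree patterns occur, provided the homological degrees are also bounded. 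This is where (c) enters, and I expect it to be the main obstacle. One must show that, for fixed $j$, the homological degrees of the factors range over a bounded-on-one-side set compatible with fixing the total. I would try to handle it by splitting into the cases ``bounded below'' and ``bounded above'' for each of the finitely many relevant $j$. In the mixed case, the total would have to be controlled using the finiteness of the set of Adams weights. If this fails, I would follow LPWZ and use only local finiteness in the appropriate sense.

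The remaining two conditions of weak Adams connectedness then follow. A simple dg module $M$ is Adams-graded, and its lowest Adams piece is a quotient killed by $I$, so $M\cong k$ up to shift. Also $A=\varprojlim A/A_{\ge j}$, where the ideals $A_{\ge j}$ of Adams degree $\ge j$ have finite-dimensional quotients in each fixed Adams range.

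For (2)–(4), the argument uses the same counting on $\mathsf B(A)$. Under (2), local finiteness of the reduced Hochschild complex contains that of $\mathsf B(A)$ as the $n$-tensor summand with $A$-factor $k$, and finite dimensionality dualizes. For (3), the Adams-degree vanishing of $I$ passes to $\mathsf B(A)$ and then, with sign reversed, to $A^!$. Piece (c) follows from the one-sided bounds together with the bound $n\le|j|$. For (4), $(\mathsf BA)^*_j$ is a finite sum over compositions of $j$ of tensor products of the finite-dimensional $I^*_{j_r}$, hence finite dimensional, and it vanishes for the appropriate sign of $j$. Dualizing preserves this.
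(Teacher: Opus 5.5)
Your arguments for (2), (4), and Adams connected $\Rightarrow$ strongly locally finite are fine. So is your argument for simple modules, once you note that simplicity forces $M$ to live in a single Adams degree $j_0$ (because $M_{\ge j_0+1}$ is a proper dg submodule); then $M$ is a simple complex killed by $I$, hence one dimensional.

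The genuine gap is the step you flag yourself: strongly locally finite $\Rightarrow$ $C_\bullet(A)$ locally finite, together with the same count reused in (3). Your proposed remedy for the ``mixed case'' cannot work. The failure is not caused by the number of Adams compositions of $j$, which is finite; it occurs inside a single composition. Take the square-zero extension $A=\kfield\oplus I$ with zero differential, $I^i_1=\kfield$ for $i\ge 0$, $I^i_2=\kfield$ for $i\le 0$, and $I^i_j=0$ otherwise. If the direction in (c) is chosen separately for each $j$, as the wording allows, this $A$ is strongly locally finite. But in Adams degree $3$ and any homological degree $t$, the summand $\bigoplus_{i_1+i_2=t-2} sI^{i_1}_1\otimes sI^{i_2}_2$ of $\mathsf B(A)\subset C_\bullet(A)$ is infinite dimensional. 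Hence $C_\bullet(A)$ and $A^!$ are not locally finite. Moreover, the Adams degree $-3$ part of $A^!$ is nonzero in every homological degree, so (3) fails as well.

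The implications therefore need a direction in (c) that is uniform in $j$, say $I^i_j=0$ for $i<m_j$ for every $j$; this is how (c) has to be read. With that reading the count is immediate. In a fixed Adams degree $j>0$ there are at most $j$ bar factors and finitely many Adams compositions. The constraints $i_r\ge m_{j_r}$ with $\sum_r i_r$ fixed confine each $i_r$ to a finite interval. The minimum of $\sum_r m_{j_r}+n$ over the finitely many compositions bounds $(\mathsf BA)^*_j$ from below, and dualizing gives (c) for $A^!$ with both directions reversed.

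A second step also fails as written: condition (c) of weak Adams connectedness. The quotient $A/A_{\ge j}$ is finite dimensional only when $I^*_{j'}$ is finite dimensional for all $j'<j$, i.e.\ essentially in the Adams connected case. Under strong local finiteness, already $A/A_{\ge 2}=\kfield\oplus I^*_1$ can be infinite dimensional. So you must also truncate homologically. Since $d$ has degree $(-1,0)$ and $I$ may sit in negative homological degrees, this truncation has to be compatible with both $d$ and the products.

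In the bounded-below case, good truncations work:
\begin{itemize}
\item for $1\le j<N$ put $J_j=\bigoplus_{i>M_j}I^i_j+d(I^{M_j+1}_j)$, and put $J_j=I^*_j$ for $j\ge N$;
\item choose the levels so that $M_{j+j'}\le M_j+m_{j'}$, e.g.\ $M_j=K+c_j$ with $c_1=0$ and $c_j=\min(c_{j''}+m_{j'})$ over $j'+j''=j$ with $j',j''\ge 1$.
\end{itemize}
Then $J=\bigoplus_j J_j$ is a two-sided dg ideal of finite codimension. Letting $K,N\to\infty$ exhibits $A$ as the inverse limit of the finite-dimensional dg bimodules $A/J$. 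In the bounded-above case, the brutal truncations $\bigoplus_{i<-M_j}I^i_j$ work instead, with $M_{j+j'}\le M_j-m_{j'}$.
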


The following result is nowadays
standard (see
\cite[Theorem 2.4]{LPWZ} 
for a proof).

\begin{theorem}\label{thm:doublecent}
Let $A$ be a dg algebra.
If $A^!$ is locally 
finite (for example, $A$ is weakly 
Adams connected), then
there is a canonical
quasi-isomorphism
$$A\stackrel{\simeq}
\longrightarrow
(A^!)^!.$$
\end{theorem}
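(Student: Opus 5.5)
The statement to prove is Theorem~\ref{thm:doublecent}: if $A^!$ is locally finite, the canonical map $A\to (A^!)^!$ is a quasi-isomorphism. I would realize the canonical map through the bar--cobar adjunction and then use local finiteness to commute linear duality with the relevant constructions.

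First I would fix the models. Set $C:=A^{\textup{!`}}=\mathsf B(A)$, so that $A^!=C^*$, and write $\Omega$ for the cobar construction. By definition, $(A^!)^!$ is the linear dual of $\mathsf B(A^!)=\mathsf B(C^*)$.

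The key step is that, because $A^!=C^*$ is locally finite in the bigraded sense of Definition~\ref{def:finite}, each tensor power satisfies
\[
(C^{\otimes n})^*\cong (C^*)^{\otimes n}
\]
degreewise. This is because each bihomogeneous piece of the tensor power is a finite sum of tensor products of finite-dimensional spaces. The finiteness of the sums is exactly where the locality/boundedness conditions (the Adams grading on one side of zero, or the homological bound) are needed.

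Granting this, dualizing gives an isomorphism of dg algebras
\[
\mathsf B(C^*)^*\cong \Omega(C^{**}).
\]
Since $C$ is locally finite, $C^{**}\cong C$ as dg coalgebras. Hence
\[
(A^!)^!\cong \Omega\mathsf B(A),
\]
and the canonical map $A\to(A^!)^!$ corresponds to the bar--cobar counit $\Omega\mathsf B(A)\to A$, read in the appropriate direction, or to its splitting.

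I would then invoke the standard fact that the counit $\Omega\mathsf B A\to A$ is a quasi-isomorphism for augmented dg algebras. It is proved via the filtration by tensor length together with the acyclicity of the twisted complex $A\otimes_\tau \mathsf B A\otimes_\tau A$. If completeness is an issue, I would use the Adams grading: in each Adams degree the filtration is finite, or at least bounded, so the spectral sequence converges. Checking that the isomorphism identifies the canonical map with the counit, up to the natural inclusion $A\to A^{**}$, which is again an isomorphism by local finiteness, is routine bookkeeping.

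The main obstacle is the convergence and completeness issue. The dual of a direct sum is a product, so without bigraded finiteness one only gets a completed cobar construction $\widehat\Omega$, and the comparison can fail. This is the familiar failure of $A\to A^{!!}$ for non-finite-type algebras. I would handle it by working one Adams degree at a time and using the locally finite hypothesis on $A^!$, which is exactly what Lemma~\ref{lem:comparison}(2) supplies when $A$ is weakly Adams connected. That hypothesis makes each bigraded component of the relevant products a finite sum, so the completed and uncompleted cobar constructions agree.
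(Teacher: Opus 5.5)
The paper does not prove this statement itself; it cites \cite[Theorem~2.4]{LPWZ}. Your bar--cobar route is the standard one, but its decisive step fails under the stated hypothesis.

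\textbf{Where the argument breaks.} Local finiteness of $A^!$ bounds each bihomogeneous piece of $A^!$. It does not bound the number of tensor lengths $m$ that contribute to a fixed bidegree of $\mathsf B(A^!)=\bigoplus_m(s\overline{A^!})^{\otimes m}$. So the bidegreewise dual $(A^!)^!$ is in general a \emph{completed} cobar construction $\widehat\Omega\mathsf B(A)$, with products over $m$, rather than $\Omega\mathsf B(A)$. The fact $\Omega\mathsf B(A)\simeq A$ then says nothing about it.

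Here is a concrete example. Take $A=\kfield[x]/(x^2)$ with $x$ in bidegree $(0,0)$; it is weakly Adams connected in the sense of Definition~\ref{def:finite}. Its dual $A^!\cong T(y)$, with $y$ in bidegree $(-1,0)$, is locally finite. But $sy$ has bidegree $(0,0)$, so $[sy|\cdots|sy]$ of every length lies in bidegree $(0,0)$ of $\mathsf B(A^!)$. In that bidegree $(A^!)^!$ is $\prod_m\kfield$, while $\Omega\mathsf B(A)$ is $\bigoplus_m\kfield$.

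Working one Adams degree at a time does not help, since everything here sits in Adams degree $0$. That device only works when the augmentation ideal of $A^!$ lies in Adams degrees of one fixed sign, and Lemma~\ref{lem:comparison}(2) does not give you that.

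\textbf{Why this is more than bookkeeping.} Take $A=\kfield\times\kfield$ augmented by the first projection, and let $e=(0,1)$. Then $\mathsf B(A)=T^c(se)$ is locally finite. Since $\kfield$ is projective over $A$, we get $A^!\simeq\kfield$, hence $(A^!)^!\simeq\kfield\not\simeq A$, even though $\Omega\mathsf B(A)\simeq A$. So local finiteness of $A^!$ alone cannot give the conclusion. Any argument that uses nothing else must break exactly at your completion step.

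\textbf{What is actually needed.} The honest comparison is the zigzag $A\leftarrow\Omega\mathsf B(A)\to\widehat\Omega\mathsf B(A)=(A^!)^!$; there is no dg splitting $A\to\Omega\mathsf B(A)$ to appeal to. The real content is that the second map, the completion map, is a quasi-isomorphism, i.e.\ that $A$ is already complete along its augmentation.
\begin{itemize}
\item If $A$ is Adams connected (or strongly locally finite), the augmentation ideals of $A$ and $A^!$ live in Adams degrees of a fixed sign with the relevant finiteness. Tensor length is then bounded in each bidegree of $\mathsf B(A^!)$, the completion map is an isomorphism, and your argument goes through as written.
\item For merely weakly Adams connected $A$, you need a genuine completeness argument. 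It would have to use conditions (b) and (c) of Definition~\ref{def:finite} ($\kfield$ is the only simple module, and $A$ is an inverse limit of finite-dimensional pieces), and your sketch never invokes them.
\end{itemize}
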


\iffalse

In the above theorem, $A^!$ being locally 
finite is not so restrictive: for example,
if $A$ is Adams connected or strongly locally
finite, then $A^!$ is locally finite
(see \cite{LPWZ}, the paragraph after Theorem 2.4
for more details).

\fi

\subsection{The derived equivalences}

The augmentation module $\kfield$, which is a $(\Adual,A)$-bimodule, defines two derived functors 
\begin{equation}
\label{equivfunctors}
\RHom_A(-,\kfield):\D(A)
\longrightarrow \D(\Adual),
\qquad
\RHom_{\Adual}
(-,\kfield):\D(\Adual) \to \D(A).
\end{equation}
The next result is a version of 
derived Koszul duality. %that is relevant here. 
It may 
be viewed as a dg-algebraic analogue of the 
contravariant duality in the work of 
Beilinson--Ginzburg--Soergel \cite{BGS}, and it is 
closely related to the Morita/Koszul duality 
framework of Dwyer--Greenlees--Iyengar
\cite{DGI}
and 
Blumberg--Mandell \cite{BM}.

\begin{theorem}
\label{thm0:koszul-equivalence}
Let $A$ be an augmented dg
algebra. If $A$ is weakly Adams connected, then
\begin{enumerate}[label=\textup{(\arabic*)},noitemsep,leftmargin=*,nosep]
\item the functor
$$\mathbf{R}\mathrm{Hom}_A(-,k):
\mathrm{thick}_A(k)\to \mathrm{Perf}(A^!)$$
is an equivalence of triangulated categories;

\item 
if moreover 
$k\in\mathrm{Perf}(A)$, the functor
$$\mathbf{R}\mathrm{Hom}_A(-,k):
\mathrm{Perf}(A)\to \mathrm{thick}_{A^!}(k)$$
is an equivalence of triangulated categories.
\end{enumerate}

\end{theorem}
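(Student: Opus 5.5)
The plan is the standard "compute on a generator, then extend by thick closure" argument. We write $F=\RHom_A(-,\kfield)$ and $G=\RHom_{\Adual}(-,\kfield)$ for the two functors in \eqref{equivfunctors}. Both are exact and contravariant, and they send direct summands to direct summands.

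\textbf{Part (1).} First compute $F$ on the generator: $F(\kfield)=\RHom_A(\kfield,\kfield)\simeq \Adual$ as a right $\Adual$-module. Since $F$ is exact, the full subcategory of $\thick_A(\kfield)$ on objects whose image lies in $\Perf(\Adual)$ is thick. It contains $\kfield$, so $F$ lands in $\Perf(\Adual)=\thick_{\Adual}(\Adual)$.

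Next, $G(\Adual)=\RHom_{\Adual}(\Adual,\kfield)\simeq \kfield$. Hence $G$ maps $\Perf(\Adual)$ into $\thick_A(\kfield)$ by the same thick-closure argument.

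Then consider the unit $\eta_M:M\to GF(M)$, the biduality map induced by the $(\Adual,A)$-bimodule $\kfield$. The objects $M$ for which $\eta_M$ is an isomorphism in $\D(A)$ form a thick subcategory, because $GF$ is exact and the unit is natural. So it suffices to check $\eta_\kfield$. Here $GF(\kfield)=\RHom_{\Adual}(\Adual,\kfield)\simeq\kfield$, and one checks that $\eta_\kfield$ is the identity of $\kfield$ under this identification. Concretely, the composite $\kfield\to \RHom_{\Adual}(\RHom_A(\kfield,\kfield),\kfield)$ evaluates at the unit of $\Adual$, which recovers $\kfield$.

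Symmetrically, the class of $N\in\Perf(\Adual)$ with $N\to FG(N)$ invertible is thick. It contains $\Adual$, since $FG(\Adual)\simeq F(\kfield)\simeq\Adual$, with the map being multiplication by elements of $\Adual$. Hence $F$ and $G$ are mutually inverse, contravariant, triangulated equivalences between $\thick_A(\kfield)$ and $\Perf(\Adual)$. Note that part (1) does not yet use weak Adams connectedness; it is essentially the Dwyer--Greenlees--Iyengar formalism.

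\textbf{Part (2).} This part uses the hypothesis. By Lemma~\ref{lem:comparison}(2), $\Adual$ is locally finite, so Theorem~\ref{thm:doublecent} gives $A\simeq(\Adual)^!=\RHom_{\Adual}(\kfield,\kfield)$. This quasi-isomorphism is compatible with the natural map $A\to \RHom_{\Adual}(\kfield,\kfield)$ given by the right action on the bimodule $\kfield$. The plan is to apply part (1) with the roles of the algebras swapped, that is, to $\Adual$ in place of $A$. For this we need that $\Adual$ satisfies the hypotheses used, and we transport along $A\simeq(\Adual)^!$, which induces an equivalence of derived categories.

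Part (1) applied to $\Adual$ says that $\RHom_{\Adual}(-,\kfield)$ gives an equivalence $\thick_{\Adual}(\kfield)\simeq\Perf((\Adual)^!)\simeq\Perf(A)$. Its inverse is $\RHom_A(-,\kfield)$ restricted to $\Perf(A)$. The assumption $\kfield\in\Perf(A)$ ensures compatibility: $F(A)\simeq\kfield$ always holds, so $F$ maps $\Perf(A)$ into $\thick_{\Adual}(\kfield)$. Moreover, $\thick_A(\kfield)\subseteq\Perf(A)$, so the two equivalences are defined on overlapping domains. Alternatively, one argues directly: $F(A)=\kfield$ and $GF(A)=\RHom_{\Adual}(\kfield,\kfield)\simeq A$ by Theorem~\ref{thm:doublecent}, so the unit is invertible on the generator $A$ and hence on all of $\Perf(A)$. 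The counit is invertible on $\kfield\in\thick_{\Adual}(\kfield)$ because $FG(\kfield)=F(A)=\kfield$.

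\textbf{Main obstacle.} The expected main obstacle is verifying that the unit and counit maps on the generators are exactly the canonical maps, and not merely abstract isomorphisms. Above all, this means identifying $A\to\RHom_{\Adual}(\kfield,\kfield)$ with the biduality quasi-isomorphism of Theorem~\ref{thm:doublecent}. This is where local finiteness enters, since it lets dualizing $\mathsf B(A)$ twice return $A$. The first attempt would be to model everything explicitly by the bar construction, computing $F(-)=\operatorname{Hom}_A(-\otimes\mathsf B(A)\otimes A,\kfield)$, and track the maps at chain level, as in \cite{LPWZ}.
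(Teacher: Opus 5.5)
Your proof is correct, but it follows a different route from the paper. The paper gives no argument of its own: it cites \cite[Proposition~4.11(d), Lemma~5.3(b), Theorem~5.5(c)]{LPWZ}. There $\Adual$ is the graded dual of $\mathsf B(A)$, the Koszul functors are modelled by twisted tensor products in the $A_\infty$ setting, and the finiteness hypotheses control graded duals and carry out the chain-level identifications you single out as the main obstacle.

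Your generator-plus-thick-closure argument, in the style of Dwyer--Greenlees--Iyengar, instead shows exactly where the hypotheses enter:
\begin{itemize}
\item Part (1) is purely formal once $\Adual\simeq\RHom_A(\kfield,\kfield)$.
\item Weak Adams connectedness is used only in (2), through Lemma~\ref{lem:comparison}(2) and Theorem~\ref{thm:doublecent}. This works because the unit $\eta_A\colon A\to GF(A)\simeq\RHom_{\Adual}(\kfield,\kfield)$ is the right-action map $a\mapsto(x\mapsto xa)$.
\end{itemize}
The citation route buys the $A_\infty$ generality claimed in the paper's remark and offloads the bookkeeping. Yours is short, self-contained, and more informative about the role of the hypotheses.

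Two small corrections.

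First, for your unit and counit computations to be literally correct, $\kfield$ must be realized as a genuine $(\Adual,A)$-bimodule. For example, take a cofibrant resolution $P\to\kfield$ with $\Adual=\operatorname{End}_A(P)$, or $P=\mathsf B(A)\otimes_\tau A$ with $\mathsf B(A)^{\#}$ acting through the coproduct. With strictly trivial actions on both sides, $\Adual$ would act on $F(\kfield)$ through its augmentation, and $F(\kfield)\simeq\Adual$ would fail.

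Second, your direct argument for (2) never uses the hypothesis $\kfield\in\Perf(A)$. The sentences claiming it ``ensures compatibility'' and invoking $\thick_A(\kfield)\subseteq\Perf(A)$ do no work and should be dropped. What you actually prove is stronger: (2) holds whenever the double-centralizer map is a quasi-isomorphism. This is consistent with the BGG correspondence for an exterior algebra $A$, where $\kfield\notin\Perf(A)$ but $\Perf(A)\simeq\thick_{\Adual}(\kfield)$ still holds.
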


\begin{proof}
For the proof of (1), see
\cite[Proposition~4.11(d) \& Lemma~5.3(b)]{LPWZ}.
For the proof of (2), see
\cite[Theorem 5.5(c)]{LPWZ}.
\end{proof}

\begin{remark}
If $A$ itself is strongly 
locally finite, then
the condition that
$k\in\mathrm{Perf}(A)$
is equivalent to that
$\mathrm{dim}\,\mathrm{Ext}^\bullet_{A}(k,k)<\infty$;
see \cite[Corollary 6.2]{LPWZ}
for a proof.
\end{remark}

As a corollary, we obtain that the functors in (\ref{equivfunctors}) induce a $K$-theory spectra weak equivalence.
For any category $\mathcal C,$ let $\mathrm{Ar}[n_1,\ldots,n_q]\,\mathcal{C}$ be the category of functors $\mathrm{Ar}[n_1,\ldots,n_q]\to\mathcal{C}.$

\begin{corollary}[\text{cf.\ \cite[Lemma~4.7]{BM}}]
\label{cor:twoequiv}
Resume the conditions in Theorem
\text{\rm{\ref{thm0:koszul-equivalence}}}.
Then the functor $\RHom_A(-,\kfield)$ induces equivalences respectively
\begin{enumerate}[label=\textup{(\arabic*)},noitemsep,leftmargin=*,nosep]
\item 
between the full subcategory of the homotopy category of 
$\mathrm{Ar}[n_1,\ldots,n_q]\,\mathsf{Mod}\,A$ generated by 
objects in $S^{(q)}_{n_1,\ldots,n_q}\,\mathcal{M}_{A}(k)$
and the full subcategory of the homotopy category of 
$\mathrm{Ar}[n_1,\ldots,n_q]\,\mathsf{Mod}\,A^!$  
generated by objects in 
$S^{(q)}_{n_1,\ldots,n_q}\mathcal{M}_{A^!}^c$;

\item 
between the full subcategory of the homotopy category of $\mathrm{Ar}[n_1,\ldots,n_q]\,\mathsf{Mod}\,A$ generated by objects in $S^{(q)}_{n_1,\ldots,n_q}\,\mathcal{M}_{A}^c$ and the full subcategory of the homotopy category of $\mathrm{Ar}[n_1,\ldots,n_q]\,\mathsf{Mod}\,A^!$  generated by objects in $S^{(q)}_{n_1,\ldots,n_q}\,\mathcal{M}_{A^!}(k)$.
\end{enumerate}
\end{corollary}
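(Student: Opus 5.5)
The plan is to follow the argument of \cite[Lemma~4.7]{BM}: promote the object-level equivalences of Theorem~\ref{thm0:koszul-equivalence} to diagram categories by working objectwise on $\mathrm{Ar}[n_1,\ldots,n_q]$. I treat (1) in detail; (2) is identical, with Theorem~\ref{thm0:koszul-equivalence}(2) in place of (1).

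\textbf{Step 1: a point-set model of the functor.} Choose a cofibrant-replacement model of $\RHom_A(-,\kfield)$ that is an honest functor $\mathsf{Mod}\,A\to\mathsf{Mod}\,\Adual$, for instance $M\mapsto \operatorname{Hom}_A(QM,\kfield)$ with $\kfield$ regarded as an $(\Adual,A)$-bimodule via a bar model. It is contravariant, so a diagram indexed by $\mathrm{Ar}[n_1,\ldots,n_q]$ goes to one indexed by the opposite category. I will identify $\mathrm{Ar}[n]^{op}\cong\mathrm{Ar}[n]$ via $(i,j)\mapsto(n-j,n-i)$. Applied objectwise, the functor then sends $\mathrm{Ar}[n_1,\ldots,n_q]\,\mathsf{Mod}\,A$ to $\mathrm{Ar}[n_1,\ldots,n_q]\,\mathsf{Mod}\,\Adual$ and preserves quasi-isomorphisms. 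Hence it descends to homotopy categories, where "homotopy category" means localization at objectwise quasi-isomorphisms.

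\textbf{Step 2: the image lies in the right subcategory.} Let $X\in S^{(q)}_{n_1,\ldots,n_q}\mathcal M_A(\kfield)$. Then:
\begin{enumerate}[label=\textup{(\roman*)},noitemsep,leftmargin=*,nosep]
\item each value of the transformed diagram lies in $\Perf(\Adual)$, by Theorem~\ref{thm0:koszul-equivalence}(1);
\item the conditions $X_{i,i}=\ast$ are preserved, since the functor sends $0$ to $0$;
\item the cofiber condition (4) becomes a fiber condition after dualizing. In the stable setting fiber and cofiber sequences coincide up to shift and homotopy, and the reindexing above turns the quotient $A_{\ldots,k}/A_{\ldots,j}$ into the required cofiber.
\end{enumerate}
After cofibrant replacement in the projective model structure on diagrams, conditions (2)--(3) can always be arranged up to weak equivalence. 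So the image is weakly equivalent to an object of $S^{(q)}_{n_1,\ldots,n_q}\mathcal M^c_{\Adual}$.

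\textbf{Step 3: full faithfulness and essential surjectivity.} An object of $S^{(q)}_{n_1,\ldots,n_q}$ is determined, up to weak equivalence, by the iterated filtration $X_{0,i_1;\ldots;0,i_q}$, the remaining entries being cofibers. So the relevant homotopy category is equivalent to the homotopy category of cubical diagrams of cofibrations. I will compute morphisms there by induction on the size of the diagram, using the homotopy limit (Reedy) decomposition
\[
\mathrm{Map}(X,Y)\simeq \operatorname{holim}\bigl(\text{objectwise mapping spaces}\bigr).
\]
Full faithfulness then follows from the objectwise statement in Theorem~\ref{thm0:koszul-equivalence}(1), applied on the level of mapping complexes and hence compatible with homotopy limits. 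For essential surjectivity, given a diagram in $S^{(q)}\mathcal M^c_{\Adual}$, apply the inverse functor $\RHom_{\Adual}(-,\kfield)$ objectwise. The unit and counit are objectwise quasi-isomorphisms by Theorem~\ref{thm0:koszul-equivalence}, ultimately resting on Theorem~\ref{thm:doublecent}.

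\textbf{Main obstacle.} The expected difficulty is Step 3. Full faithfulness on diagram homotopy categories needs more than the triangulated equivalence: it needs the equivalence at the level of dg mapping complexes, that is, a dg or $\infty$-enhancement. I would obtain this first by observing that the comparison map
\[
\RHom_A(M,N)\to\RHom_{\Adual}\bigl(\RHom_A(N,\kfield),\RHom_A(M,\kfield)\bigr)
\]
is a quasi-isomorphism for $M=N=\kfield$, which holds by the definition of $\Adual$ together with Theorem~\ref{thm:doublecent}. I would then extend it to all of $\thick_A(\kfield)$ by a thick-subcategory (five-lemma) argument, exactly as in \cite{BM}.
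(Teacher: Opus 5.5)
Your argument is correct. It rests on the same mechanism as the paper's proof: apply $\RHom_A(-,\kfield)$ objectwise, use stability so that the dualized cofiber conditions are again cofiber conditions, and replace up to weak equivalence to restore the strict $S_\bullet$ conditions. The paper gets the last step from Hovey's facts (a) and (b). Your organization differs in two respects.

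\begin{itemize}
\item \textbf{Handling of $q$.} The paper inducts on $q$, applying the one-variable $S_\bullet$ argument to the stable model categories $\mathrm{Ar}[n_q]\,\mathsf{Mod}\,A$ and $\mathrm{Ar}[n_q]\,\mathsf{Mod}\,\Adual$. You treat all $q$ directions at once through the multi-filtration $X_{0,i_1;\ldots;0,i_q}$. You also make explicit the reindexing $\mathrm{Ar}[n]^{op}\cong\mathrm{Ar}[n]$ forced by contravariance, which the paper suppresses.
\item \textbf{Full faithfulness on diagrams.} You address full faithfulness on the homotopy categories of \emph{diagrams}, which the paper leaves implicit in its appeal to exactness. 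You are right that the triangulated equivalence of Theorem~\ref{thm0:koszul-equivalence} alone does not control morphisms in $\mathrm{Ho}(\mathrm{Ar}[n_1,\ldots,n_q]\,\mathsf{Mod}\,A)$.
\end{itemize}

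Your remedy for the second point is sound but heavier than needed. The point-set models of $\RHom_A(-,\kfield)$ and $\RHom_{\Adual}(-,\kfield)$ form a contravariant adjunction. Its unit and counit are natural maps (or natural zigzags). They are objectwise quasi-isomorphisms on diagrams with values in $\thick_A(\kfield)$, resp.\ $\Perf(\Adual)$: check this at the generators $\kfield$ and $\Adual$, then use exactness. A natural objectwise weak equivalence is already an isomorphism in the homotopy category of diagrams. So the unit/counit argument from your essential-surjectivity step gives the whole equivalence, and the holim decomposition of mapping spaces can be dropped.

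Two small corrections.
\begin{itemize}
\item In part (1) the comparison at $M=N=\kfield$ is tautological, since $\RHom_A(\kfield,\kfield)\simeq\Adual\simeq\RHom_{\Adual}(\Adual,\Adual)$. Theorem~\ref{thm:doublecent} is what enters in part (2), where the comparison at the generator $A$ is the map $A\to(\Adual)^!$.
\item Projective cofibrant replacement of a whole $\mathrm{Ar}$-indexed diagram does not keep the entries $X_{i,i}$ equal to $\ast$ on the nose. Cofibrantly replace only the filtration part $X_{0,i_1;\ldots;0,i_q}$ with all $i_r\ge 1$; this yields conditions (2)--(3). Then put $\ast$ in the remaining filtration positions and recover the other entries as strict iterated quotients.
\end{itemize}
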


\begin{proof}
We show (1). The proof of (2) is similar and omitted.
We claim that $S_{\bullet}$-construction preserves the equivalence between two categories generated by $\mathcal{M}_{A}(k)$ and $\mathcal{M}_{A^!}^c$ inside the homotopy categories.
Note that $\RHom_A(-,k)$ maps the zero object to itself, maps cofibrations to fibrations, preserves weak equivalences,  and maps homotopy pushout squares to homotopy pullback squares.
This equivalence follows from the facts below for a stable model category $\mathcal{C}$ \cite[Theorem~7.1.11]{Hovey}, where $\mathcal C$ may take $\mathrm{Ar}[n_q]\,\mathsf{Mod}\,A$ and $\mathrm{Ar}[n_q]\,\mathsf{Mod}\,A^!$ 
(see \cite[Example~2.4(i)]{SchShi}):
\begin{enumerate}[label=\textup{(\alph*)},noitemsep,leftmargin=*,nosep]
\item Every map in $\mathcal{C}$ is weak equivalent to a cofibration;
\item A square in $\mathcal{C}$ is homotopy pullback square in commuting square if and only if it is homotopy pushout if and only if it is weak equivalent to a pullback square of fibrations.
\end{enumerate}
Moreover, $S_{\bullet}$-construction preserves the equivalence between two categories generated by $S_{n_{q}}\mathcal{M}_{A}(k)$ and $S_{n_{q}}\mathcal{M}_{A^!}^c$ inside the homotopy categories.
The desired category equivalence follows from the induction. 
% For (ii), the argument is completely analogous and is left to the interested reader.
\end{proof}

\begin{lemma}[\text
{\cite[Lemma~4.6]{BM}}]Let $\mathcal{C}$ (resp. $\mathcal M$) be a closed Waldhausen category of cofibrant objects in a closed model category $\overline{\mathcal C}$ (resp. $\overline{\mathcal M}$) with cofibrations and weak equivalences from $\overline{\mathcal C}$ (resp. $\overline{\mathcal M}$).
Let $F:\mathcal{C}\to \overline{\mathcal M}$ be a contravariant functor which maps the zero object to itself, cofibrations to fibrations, and preserves the weak equivalences.
Assume that $F$ induces a category equivalence between two 
full subcategories in the homotopy categories 
$\mathrm{Ho}\,\overline{\mathcal C}$ and 
$\mathrm{Ho}\,\overline{\mathcal M}$  generated by $\mathcal C$ and $\mathcal M$ respectively.
Then $F$ induces a weak equivalence of the nerves of 
$w\mathcal C$ and $w\mathcal M.$ 
\end{lemma}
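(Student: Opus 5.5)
We prove the lemma by the standard strategy for showing that an exact functor inducing an equivalence of homotopy categories induces an equivalence of the nerves of weak equivalence categories. Each connected component of $N(w\mathcal C)$ corresponds to a weak equivalence class of objects. That component is equivalent to $B\,\mathrm{hAut}(X)$, the classifying space of the simplicial monoid of homotopy self-equivalences of a cofibrant–fibrant representative $X$. This is the Dwyer–Kan classification of the nerve of a category of weak equivalences in a (closed) model category. Since $F$ is contravariant, the argument is applied to $F^{\mathrm{op}}:w\mathcal C^{\mathrm{op}}\to w\mathcal M$; as $N(w\mathcal C^{\mathrm{op}})\cong N(w\mathcal C)$ canonically, this is harmless.

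\emph{Step 1: $\pi_0$.} The components of $N(w\mathcal C)$ are the isomorphism classes of objects of $\mathrm{Ho}\,\overline{\mathcal C}$ that are represented in $\mathcal C$, and likewise for $\mathcal M$. Because $F$ preserves weak equivalences, it descends to these classes. The hypothesis that $F$ is an equivalence between the full subcategories of $\mathrm{Ho}\,\overline{\mathcal C}$ and $\mathrm{Ho}\,\overline{\mathcal M}$ generated by $\mathcal C$ and $\mathcal M$ gives surjectivity on components, by essential surjectivity. Full faithfulness then gives injectivity on components, since isomorphisms are reflected.

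\emph{Step 2: the components.} Fix $X\in\mathcal C$. Replace $F$ by a map of Dwyer–Kan simplicial localizations, using that $F$ preserves weak equivalences. We must show that the induced map $\mathrm{hAut}(X)\to\mathrm{hAut}(FX)$ is a weak equivalence of simplicial monoids. Its $\pi_0$ is the map $\mathrm{Aut}_{\mathrm{Ho}}(X)\to\mathrm{Aut}_{\mathrm{Ho}}(FX)^{\mathrm{op}}$, which is bijective by full faithfulness. For the higher homotopy groups, $\pi_i$ of the mapping space $\mathrm{Map}(X,X)$ at the identity is $\mathrm{Ho}(\Sigma^iX,X)$, computed via cylinder and cone objects of cofibrant objects. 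This uses the hypotheses that $F$ preserves the zero object and sends cofibrations to fibrations. Then $F$ sends the cofibration sequence $X\to CX\to\Sigma X$ to a fibration sequence $F\Sigma X\to FCX\to FX$ with $FCX\simeq 0$, so $F\Sigma X\simeq\Omega FX$. Hence $F$ transports $\mathrm{Ho}(\Sigma^iX,X)$ to $\mathrm{Ho}(FX,\Omega^iFX)$, which is $\pi_i$ of the mapping space of $FX$. Full faithfulness on the generated subcategories makes this a bijection. We need $\Sigma^iX$ to remain in the generated subcategory, which holds because $\mathcal C$ is closed under cones and cofibers; this is part of the Waldhausen structure together with the closedness assumption.

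\emph{Step 3: conclude.} A map of simplicial monoids that is a weak equivalence induces a weak equivalence on classifying spaces. Combined with Step 1, this gives $N(w\mathcal C)\simeq N(w\mathcal M)$.

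The main obstacle is Step 2. We must identify $N(w\mathcal C)$ (built from cofibrant objects only) with the Dwyer–Kan classification and match mapping spaces for a \emph{contravariant} functor that is only assumed to convert cofibrations into fibrations. We would first try to reduce to the covariant case by viewing $F$ as a left-exact functor out of $\mathcal C^{\mathrm{op}}$, treated as a category of fibrant objects. Waldhausen's approximation theorem does not apply directly, since the target consists of fibrant rather than cofibrant objects. This is why we pass through the Dwyer–Kan description instead.
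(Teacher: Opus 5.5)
Your architecture is the natural one: Dwyer--Kan's decomposition $N w\mathcal C\simeq\coprod_{[X]}B\,\mathrm{hAut}(X)$, a bijection on components from the homotopy-category equivalence, then a comparison of homotopy automorphism monoids. Steps~1 and~3 are fine.

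One small correction first. $F$ does not give a functor $w\mathcal C^{\mathrm{op}}\to w\mathcal M$, because $FX$ is fibrant but generally not cofibrant. The map of nerves is the zigzag $N w\mathcal C^{\mathrm{op}}\to N w\overline{\mathcal M}_{\mathcal M}\xleftarrow{\ \simeq\ }N w\mathcal M$, where $\overline{\mathcal M}_{\mathcal M}$ consists of objects of $\overline{\mathcal M}$ weakly equivalent to objects of $\mathcal M$. The right-hand map is an equivalence by cofibrant replacement. This zigzag, not the approximation theorem, is all the ``fibrant target'' issue costs.

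\textbf{The gap is in Step~2.} There are two problems.
\begin{enumerate}
\item The cone/suspension argument computes homotopy groups of $\mathrm{Map}(X,Y)$ at the \emph{zero} map: $\pi_i(\mathrm{Map}(X,Y),0)\cong\mathrm{Ho}(\Sigma^iX,Y)$. The identity lies in a different component. In a general pointed model category, different components of a mapping space need not have the same homotopy groups. So the identification $\pi_i(\mathrm{hAut}(X),\mathrm{id})\cong\mathrm{Ho}(\Sigma^iX,X)$ needs a group-like structure on mapping spaces that $F$ respects, which in practice means stability.
\item Your fibre sequence $F\Sigma X\to FCX\to FX$ does not follow from the listed properties of $F$. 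Sending cofibrations to fibrations and $0$ to $0$ only says that $FCX\to FX$ is a fibration with contractible source. Identifying its fibre with $F\Sigma X$ requires $F$ to carry the homotopy pushout $\Sigma X=CX\cup_X 0$ to a homotopy pullback.
\end{enumerate}
You use both stability and this exactness without stating them. The lemma as quoted assumes neither, so Step~2 does not go through as written.

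Both properties do hold where the paper uses the lemma. The paper gives no proof of the lemma itself; it is quoted from Blumberg--Mandell. In the proof of Corollary~\ref{cor:twoequiv}, the module and diagram categories are stable model categories, and $\RHom_A(-,\kfield)$ is noted to send homotopy pushout squares to homotopy pullback squares.

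With these hypotheses added, Step~2 can be completed as follows.
\begin{itemize}
\item Exactness gives $F\Sigma X\simeq\Omega FX$.
\item Via $\mathrm{Map}(X,Y)\simeq\Omega\,\mathrm{Map}(X,\Sigma Y)$, exactness also shows that the induced map $\mathrm{Map}(X,Y)\to\mathrm{Map}(FY,FX)$ is a loop map. Hence it is an H-map of group-like H-spaces.
\item Such a map is a weak equivalence once it is bijective on $\pi_0$ and on $\pi_i$ at the zero component, since translation identifies every component with the zero component.
\item At the zero component the map is $\mathrm{Ho}(\Sigma^iX,Y)\to\mathrm{Ho}(FY,\Omega^iFX)$. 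This is bijective by full faithfulness, because $\Sigma^iX$ stays in $\mathcal C$.
\item Restricting to invertible components gives $\mathrm{hAut}(X)\simeq\mathrm{hAut}(FX)^{\mathrm{op}}$.
\end{itemize}
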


In the above lemma, taking $\mathcal{C}$ and $\mathcal{M}$ to be respectively $S^{(q)}_{n_1,\ldots,n_q}\,\mathcal{M}_{A}(k)$ and $S^{(q)}_{n_1,\ldots,n_q}\,\mathcal{M}_{A^!}^{c}$ (or $S^{(q)}_{n_1,\ldots,n_q}\,\mathcal{M}_{A}^c$ and $S^{(q)}_{n_1,\ldots,n_q}\,\mathcal{M}_{A^!}(k)$) in Corollray \ref{cor:twoequiv}, we then obtain the following.

\begin{corollary}[{Blumberg-Mandell 
\cite[Theorems~1.1 \& 1.2]{BM}}]\label{cor:$K$-equivalence}
Suppose $A$ is a dg algebra as in Theorem~\text{\rm{\ref{thm0:koszul-equivalence}}}.
Then there are two weak equivalences of $K$-theory spectra
\[K (\thick_A(\kfield) )\simeq K(\Adual),\qquad K(A)\simeq K (\thick_{\Adual}(\kfield) ).\]
In particular, $K_n (\thick_A(\kfield) )\cong K_n(\Adual)$ and $K_n(A)\cong K_n (\thick_{\Adual}(\kfield))$ for $n\ge 0.$
\end{corollary}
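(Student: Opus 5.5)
The plan is to apply the preceding lemma (Blumberg--Mandell, Lemma~4.6) levelwise to the iterated $S_\bullet$-construction, and then assemble the resulting equivalences into an equivalence of spectra. I treat the first equivalence $K(\thick_A(\kfield))\simeq K(\Adual)$ in detail. The second follows by the same argument, using part (2) of Theorem~\ref{thm0:koszul-equivalence} and part (2) of Corollary~\ref{cor:twoequiv}.

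\emph{Step 1.} For each $q$ and each multi-index $(n_1,\ldots,n_q)$, take $\mathcal C=S^{(q)}_{n_1,\ldots,n_q}\mathcal M_A(\kfield)$ and $\mathcal M=S^{(q)}_{n_1,\ldots,n_q}\mathcal M^c_{\Adual}$. These sit inside the model categories $\mathrm{Ar}[n_1,\ldots,n_q]\,\mathsf{Mod}\,A$ and $\mathrm{Ar}[n_1,\ldots,n_q]\,\mathsf{Mod}\,\Adual$, with the projective (or Reedy) structure. Let $F$ be $\RHom_A(-,\kfield)$, modelled at the point-set level by $\operatorname{Hom}_A(-,\kfield)$ on cofibrant objects, and applied objectwise on diagrams.

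I need to check four properties of $F$:
\begin{enumerate}[label=\textup{(\roman*)},noitemsep,leftmargin=*,nosep]
\item $F$ sends $0$ to $0$;
\item $F$ sends cofibrations (injections with projective cokernel) to surjections, i.e.\ fibrations;
\item $F$ preserves quasi-isomorphisms between cofibrant objects, since $\kfield$ is fibrant;
\item by Corollary~\ref{cor:twoequiv}(1), $F$ induces an equivalence between the full subcategories of the homotopy categories generated by $\mathcal C$ and $\mathcal M$.
\end{enumerate}
The lemma then gives a weak equivalence of nerves $N(w\mathcal C)\simeq N(w\mathcal M)$ for each multi-index.

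\emph{Step 2.} These equivalences are natural in the multisimplicial structure maps, because $F$ is applied objectwise and commutes with the face and degeneracy operators, which are restriction and insertion of zeros. By the realization lemma for multisimplicial spaces, we obtain weak equivalences $|wS^{(q)}_{\bullet,\ldots,\bullet}\mathcal M_A(\kfield)|\simeq|wS^{(q)}_{\bullet,\ldots,\bullet}\mathcal M^c_{\Adual}|$ for every $q$.

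The structure maps of the $K$-theory spectrum are induced by the inclusion of $1$-simplices, and they are preserved by the same naturality. So we get a levelwise equivalence of spectra, hence $K(\thick_A(\kfield))\simeq K(\Adual)$. Taking homotopy groups gives the stated isomorphisms of $K_n$ for $n\ge0$.

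\emph{Main obstacle.} The delicate point is contravariance: $F$ turns cofibration sequences into fibration sequences, so $F$ does not literally map $S_n\mathcal C$ into $S_n\mathcal M$. I would handle this as Blumberg--Mandell do. Since both model categories are stable, fibrations and cofibrations are interchangeable up to weak equivalence (facts (a) and (b) in the proof of Corollary~\ref{cor:twoequiv}). Reversing the order of the flag, via the isomorphism $\mathrm{Ar}[n]\cong\mathrm{Ar}[n]^{op}$, $(i,j)\mapsto(n-j,n-i)$, then identifies the image of a filtered object with an object of $S_n$ up to weak equivalence. This is exactly why Lemma~4.6 of \cite{BM} allows $F$ to land in $\overline{\mathcal M}$ rather than in $\mathcal M$, and why only homotopy-category equivalences are required. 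I expect the remaining verifications to be routine bookkeeping, so the cited lemma carries the substantive content.
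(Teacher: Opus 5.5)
Your proposal follows the same route as the paper: apply the Blumberg--Mandell lemma levelwise with $\mathcal C=S^{(q)}_{n_1,\ldots,n_q}\mathcal M_A(\kfield)$ and $\mathcal M=S^{(q)}_{n_1,\ldots,n_q}\mathcal M^c_{\Adual}$ (resp.\ $S^{(q)}_{n_1,\ldots,n_q}\mathcal M^c_A$ and $S^{(q)}_{n_1,\ldots,n_q}\mathcal M_{\Adual}(\kfield)$), with Corollary~\ref{cor:twoequiv} supplying the homotopy-category equivalence. Your extra remarks on the realization lemma, the spectrum structure maps, and the flag reversal $(i,j)\mapsto(n-j,n-i)$ only make explicit what the paper leaves implicit; the reversal permutes the face maps, but this is harmless after realization.
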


\begin{remark}
Consider the following two functors
\[
F:=-\otL_{A}\kfield:
\D(A)\longrightarrow \D(\Adual),
\qquad
G:=\RHom_{\Adual}(\kfield,-):\D(\Adual)
\longrightarrow \D(A),
\]
which form an adjoint pair.
It is also straightforward to show that
the adjunction 
$(F,G)$ restricts to inverse equivalences
\[
\Perf(A)\xrightarrow{\ \sim\ } 
\thick_{\Adual}(\kfield),
\qquad
\thick_{A}(\kfield)\xrightarrow{\ \sim\ } 
\Perf(\Adual),
\]
and hence the equivalence on the corresponding
$K$-theories.
However, notice that in this case the functor $A\mapsto K_n(\thick_{A}(\kfield))$ is covariant instead of being contravariant.
\end{remark}

\subsection{Cyclic theory of the Koszul dual}

Let $A$ and
$A^!$ as in the previous section.
The following result relates the negative cyclic homology of $A$ and cyclic cohomology of $A^!$:

\begin{proposition}[{Jones--McCleary \cite{JM}}]
\label{prop:JM}
Assume that $A^!$ is locally finite (for example, $A$ is weakly Adams connected). Then there is a natural isomorphism \[HC_n^{-}(A)\cong HC^n(\Adual)\text{ for any $n\in \mathbb Z$.}\]
\end{proposition}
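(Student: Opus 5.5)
The plan is to identify the Hochschild cochain complex of $\Adual$ with the Hochschild chain complex of $A$, up to quasi-isomorphism and compatibly with the mixed structure. Then I would pass to the cyclic constructions. By Theorem~\ref{thm:doublecent}, since $\Adual$ is locally finite we have $A\simeq (\Adual)^!$. By the remark on quasi-invariance of cyclic theories, it therefore suffices to prove $HC^-_n((\Adual)^!)\cong HC^n(\Adual)$. Writing $E=\Adual$, the algebra $E^!$ is the linear dual of the bar coalgebra $\mathsf B(E)$. So I need to compare negative cyclic homology of the dual of a coalgebra with cyclic cohomology of $E$.

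The first step is a chain-level comparison. The normalized Hochschild cochain complex $C^\bullet(E)=\operatorname{Hom}_k(E\otimes \bar{\mathsf B}(E),k)$, twisted by the differential, is the dual of the cyclic bar complex $E\otimes_\tau \mathsf B(E)$. Meanwhile $C_\bullet(E^!)$ is $E^!\otimes \bar{\mathsf B}(E^!)$. I would use the standard Koszul/twisting-cochain reduction: the Hochschild complex of $E^!$ is quasi-isomorphic to the ``small'' complex $E^!\otimes_\tau \mathsf B(E)^{\vee}$-type twisted tensor product, which by local finiteness is $(E\otimes_\tau \mathsf B(E))^\vee$ termwise. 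Jones and McCleary realize this as a map of mixed complexes (Connes' $B$ corresponds to the dual of the cyclic operator on the cobar side). Local finiteness, in the bigraded sense of Definition~\ref{def:finite} and Lemma~\ref{lem:comparison}(2), is exactly what guarantees that the double dual is harmless. It also ensures that $\operatorname{Hom}(\bigoplus,k)=\prod\operatorname{Hom}$ matches the completed tensor products appearing in $C_\bullet(E^!)$, since each bihomogeneous piece is finite-dimensional.

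The second step passes to cyclic theories. A quasi-isomorphism of mixed complexes $(C_\bullet(E^!),b,B)\simeq (C^\bullet(E),b^*,B^*)$ yields a quasi-isomorphism $(C_\bullet(E^!)\llbracket u\rrbracket,b+uB)\simeq (C^\bullet(E)\llbracket v\rrbracket,b^*+vB^*)$ after matching $u$ with $v$ (the degree conventions flip under dualization). For this I would filter by powers of $u$. The filtration is complete and exhaustive in the $u$-adic sense, and a map inducing a quasi-isomorphism on associated graded Hochschild pieces induces one on the completed complexes, by the Eilenberg--Moore comparison theorem for complete filtrations. Taking homology in degree $n$ gives $HC^-_n(A)\cong HC^n(\Adual)$. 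Naturality follows because every map used is functorial in augmented dg algebra maps.

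I expect the main obstacle to be the first step: producing a genuine morphism of mixed complexes, not merely of Hochschild complexes, between $C_\bullet(E^!)$ and the dual of $C_\bullet(E)$. I would also need to check that completions behave (the $\llbracket u\rrbracket$ product versus the dual of a direct sum). My first attempt would be to follow Jones--McCleary directly, using the cyclic bar construction and the identification of $B$ with the cyclic permutation on both sides. I would verify convergence by using the Adams grading to make every relevant sum finite in each bidegree.
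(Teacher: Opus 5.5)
Your argument is essentially the paper's. With $E=\Adual$ locally finite one has $E^\vee\cong\mathsf B(A)$ and $E^!=\mathsf B(E)^\vee\cong\Omega\mathsf B(A)$. So your small complex is exactly the coHochschild complex $C_\bullet(A^{\textup{!`}})$ of the paper's sketch, and identifying it with $(E\otimes_\tau\mathsf B(E))^\vee=C^\bullet(\Adual)$ is the paper's dualization step. The detour through Theorem~\ref{thm:doublecent} is harmless but unnecessary: the counit $\Omega\mathsf B(A)\to A$ is a quasi-isomorphism for every augmented dg algebra. Your $u$-adic completeness argument is a useful addition, since it fills in the passage from mixed complexes to $HC^-$ and $HC^\bullet$ that the paper leaves implicit.

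Two corrections are needed.

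First, the small complex should be $E^!\otimes_{\tau^\vee}E^\vee$. Here $\tau^\vee\colon E^\vee\to\mathsf B(E)^\vee=E^!$ is dual to the universal twisting cochain, and it is acyclic because $\Omega(E^\vee)\cong E^!$ under local finiteness. As you wrote it, $E^!\otimes_\tau\mathsf B(E)^\vee$ is just $E^!\otimes E^!$.

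Second, ``matching $u$ with $v$'' cannot preserve the index $n$. In the conventions of Section~\ref{sec:cyclic}, $b$ and $B$ have homological degrees $-1$ and $+1$, while $b^*$ and $B^*$ have cohomological degrees $+1$ and $-1$. An identification intertwining them therefore sends homological degree $n$ to cohomological degree $-n$. What your argument actually yields, like the paper's sketch, is $HC^-_n(A)\cong HC^{-n}(\Adual)$.

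A concrete check: take $A=\kfield[x]$ with $x$ of Adams degree $1$, so that $\Adual\simeq\Lambda(y)$ with $|y|=-1$. The class of $u$ gives $HC^-_{-2}(A)=\kfield$. By contrast, $HC^{-2}(\Adual)=0$ for degree reasons, while $HC^{2}(\Adual)=\kfield$. This indexing slip is already present in the statement and in the paper's sketch. Your write-up should still make the regrading explicit rather than assert that degree $n$ goes to degree $n$.
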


\begin{proof}[Sketch of proof]
Jones--McCleary identify the 
mixed complex computing
cyclic 
homology of $A$ with the mixed complex 
attached to the coalgebra $A^{\textup{!`}}$
$$
(C_\bullet(A),b,B)\simeq
(C_\bullet(A^{\textup{!`}}),
b^*,B^*),
$$
where the right hand is the 
Hochschild complex of the coalgebra 
$A^{\textup{!`}}$
together with the coalgebraic
Hochschild boundary $b^*$ and
Connes cocyclic operator $B^*$.
Since $A^!$ is locally finite, $(C_\bullet(A^{\textup{!`}}), b^*,B^*)$, via linear duality, is exactly the cyclic cochain complex computing the cyclic cohomology of $A^!$.
Passing to homology gives the desired isomorphism.
\end{proof}

\section{Loday's proposal and a Koszul-dual 
model for contravariant 
$K$-theory}\label{sec:loday}

As we recalled in \S\ref{sect:intro},
Loday proposed
the following:

\begin{conjecture}
[{\cite[\S11.4.4]{Loday}}]\label{conj:Loday}
For a dg algebra $A$, there exist $K$-contravariant groups $K^n(A)$ and a Chern character $\mathrm{ch}^*:K^n(A)\to HC^n(A)$ such that the following formula
\begin{equation}\label{conjpairing}
\langle\pi,\xi\rangle_K
=\langle
\mathrm{ch}^*(\pi),\mathrm{ch}_*(\xi)
\rangle_{\mathrm{cyc}}
\end{equation}
holds, where $\langle-,-\rangle_K$ would be a pairing in $K$-theory.
\end{conjecture}

\noindent Note that in this conjecture,
$\langle-,-\rangle_K$
is completely determined by 
the right hand
side of \eqref{conjpairing}, 
namely by  the
Chern character maps
and the pairing on the cyclic 
cohomology
and homology.

We next spell out the construction suggested by the previous sections. In the rest of this section, we assume that $A$ is weakly Adams connected.

\subsection{The candidate groups}

\begin{definition}\label{def:Kdual}
For the above dg algebra $A$, let
\[
K^n_{\mathrm{Kos}}
(A):=K_n (\thick_A(\kfield) )
\text{ for }n\ge 0.
\]
\end{definition}

By Corollary~\ref{cor:$K$-equivalence}, this 
group is canonically isomorphic to 
$K_n(\Adual)$.
The point of this definition is that 
$K^n_{\mathrm{Kos}}(A)$ is constructed from a 
subcategory of the category of $A$-modules, but via Koszul 
duality it is computed by the ordinary 
$K$-theory of the dual algebra $\Adual$.

\begin{definition}\label{def:chdual}
Define the \emph{contravariant Chern character}
\[
\ch_n^{\vee}:K^n_{\mathrm{Kos}}
(A)\longrightarrow HC^n(A)
\]
to be the composition
\[
K_n (\thick_A(\kfield) )
\xrightarrow{\ \sim\ } K_n(\Adual)
\xrightarrow{\ch_n} HC^{-}_n(\Adual)
\xrightarrow{\ \sim\ } HC^n(A).
\]
The first arrow is induced by derived Koszul duality, the middle arrow is the Jones–Goodwillie Chern character, and the last arrow is the isomorphism of Jones--McCleary.
\end{definition}

\begin{proposition}\label{prop:naturality}
The assignment $A\mapsto K^n_{\mathrm{Kos}}
(A)$ is functorial with respect to morphisms 
of augmented dg algebras. The maps $\ch_n^{\vee}$ 
are natural with respect to these morphisms.
\end{proposition}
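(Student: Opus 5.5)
Given a morphism $f:A\to A'$ of augmented dg algebras (both weakly Adams connected), we first decide which way $K_{\mathrm{Kos}}$ should go and construct it on $\thick(\kfield)$. Since $f$ is compatible with the augmentations, restriction of scalars $f^*:\mathsf{Mod}\,A'\to\mathsf{Mod}\,A$ sends the augmentation module $\kfield_{A'}$ to $\kfield_A$. It is exact, so it sends $\thick_{A'}(\kfield)$ into $\thick_A(\kfield)$. Thus contravariance is forced: we take $K^n_{\mathrm{Kos}}(f):K^n_{\mathrm{Kos}}(A')\to K^n_{\mathrm{Kos}}(A)$, which is the variance Loday wants. Restriction does not preserve cofibrant objects, so at the Waldhausen level we compose $f^*$ with a functorial cofibrant replacement in $\mathsf{Mod}\,A$, for instance the bar resolution $-\otimes_A \mathsf{B}(A,A,A)$. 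This yields an exact functor $\mathcal M_{A'}(\kfield)\to\mathcal M_A(\kfield)$, natural up to natural weak equivalence, which suffices to define maps on $K$-groups. Functoriality, namely $(gf)^*=f^*g^*$ and $\mathrm{id}^*=\mathrm{id}$, then holds up to natural weak equivalences of exact functors, and these induce homotopic maps on $|wS_\bullet|$.

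Next we show the Koszul comparison is natural. The map $f$ induces a dg coalgebra map $\mathsf{B}(f):\mathsf{B}(A)\to\mathsf{B}(A')$, and dually a dg algebra map $f^!:(A')^!\to \Adual$, so $A\mapsto\Adual$ is contravariant. The key claim is that the square
\[
\begin{array}{ccc}
\thick_{A'}(\kfield) & \xrightarrow{\RHom_{A'}(-,\kfield)} & \Perf((A')^!)\\
\downarrow f^* & & \downarrow -\otL_{(A')^!}\Adual\\
\thick_A(\kfield) & \xrightarrow{\RHom_A(-,\kfield)} & \Perf(\Adual)
\end{array}
\]
commutes up to natural isomorphism. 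I would check this on the generator: both paths send $\kfield_{A'}$ to the free module $\Adual$. The comparison map $\RHom_{A'}(M,\kfield)\otL_{(A')^!}\Adual\to\RHom_A(f^*M,\kfield)$ is natural in $M$ and compatible with triangles, so by a thick-subcategory argument it is an isomorphism on all of $\thick_{A'}(\kfield)$. To make this work at the model level, I would use the explicit model $\RHom_A(M,\kfield)=\operatorname{Hom}(M\otimes_A\mathsf{B}(A,A,\kfield)$-type twisted complexes, on which $\mathsf{B}(f)$ acts strictly. Base change $-\otL_{(A')^!}\Adual$ is exactly the covariant functoriality $K((A')^!)\to K(\Adual)$. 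Through the Waldhausen-level version of Corollary~\ref{cor:$K$-equivalence} (Lemma 4.6 of \cite{BM}), the square therefore gives a commutative square of $K$-groups. I expect this to be the main obstacle, because $\RHom_A(-,\kfield)$ is contravariant and turns cofibrations into fibrations, so the strict models require care. Working up to natural weak equivalence of functors on $S_\bullet$ should suffice.

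Finally, the Chern character $\ch:K(\Adual)\to HC^-(\Adual)$ is natural for the dg algebra map $f^!$ by Theorem~\ref{thm:JG}. It remains to show that the Jones--McCleary isomorphism $HC^-_n(\Adual)\cong HC^n(A)$ of Proposition~\ref{prop:JM} is natural in $f$. For this we view it as the composite of the quasi-isomorphism of mixed complexes $C_\bullet(\Adual)\simeq\operatorname{Hom}(C_\bullet(A^{\textup{!`}}),\kfield)$, which is natural in coalgebra maps, with the Jones--McCleary identification $C_\bullet(A^{\textup{!`}})\simeq C_\bullet(A)$, which is natural in $A$ through $\mathsf{B}(f)$. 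Local finiteness is used only to make the dualization an isomorphism, and that step is natural. Dualizing gives naturality on $HC^\bullet$. Pasting the three commutative squares shows that $\ch_n^\vee$ is natural.
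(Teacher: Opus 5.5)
Your proposal is correct and follows the paper's proof. The paper simply asserts that a morphism of augmented dg algebras induces a functor on the thick subcategories generated by $k$, and that the three maps in the definition of $\ch_n^{\vee}$ (Koszul comparison, Jones--Goodwillie Chern character, Jones--McCleary isomorphism) are natural. You go further than the paper's sketch without changing the decomposition: you fix the variance as contravariant via restriction of scalars plus a cofibrant replacement, and you verify that the Koszul comparison square with base change along $f^!$ commutes by checking it on the generator $k$ and extending by a thick-subcategory argument.
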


\begin{proof}
The three maps in Definition~\ref{def:chdual} are natural. 
Indeed, a morphism of augmented dg algebras induces 
a triangulated functor on derived categories, 
hence on 
the thick subcategories generated by the 
augmentation modules. The construction of 
$\Adual$ is functorial up to 
quasi-isomorphism, 
and both the Jones--Goodwillie Chern 
character and the Jones--McCleary comparison 
are natural. 
\end{proof}

\subsection{The pairing}

Let $\eta\in K_n(A)$ and 
$\xi\in K^n_{\mathrm{Kos}}(A)$. Their images 
under the ordinary and contravariant Chern 
characters are classes
\[
\ch_n(\eta)\in HC^-_n(A),
\qquad
\ch_n^{\vee}(\xi)\in HC^n(A).
\]
Using the pairing \eqref{eq:cyclic-pairing}, 
we obtain a bilinear form
\begin{equation}\label{eq:Kpairing}
\langle \xi,\eta\rangle_K
:= \langle\ch_n^{\vee}
(\xi), \ch_n(\eta) \rangle_{\mathrm{cyc}}
\in \kfield.
\end{equation}

\begin{theorem}\label{thm:loday}
The pairing \eqref{eq:Kpairing} is 
characterized by the identity
\[
\langle \xi,\eta\rangle_K
=
 \langle 
\ch_n^{\vee}
(\xi),
\ch_n(\eta) \rangle_{\mathrm{cyc}}\quad\text{for any}\quad\xi\in K^n_{\mathrm{Kos}}(A),\ \eta\in 
K_n(A).
\]
Equivalently, the triangle
\[
\xymatrixcolsep{4pc}
\xymatrix{
K^n_{\mathrm{Kos}}(A)\times K_n(A) 
\ar[r]^-{\ch_n^{\vee}\times \ch_n} 
\ar[dr]_{\langle -, -\rangle_K} & 
HC^n(A)\times HC^-_n(A) 
\ar[d]^{\langle -, -\rangle_{\mathrm{cyc}}} \\
& \kfield
}
\]
commutes. In this sense, the groups 
$K^n_{\mathrm{Kos}}(A)$ realize a concrete 
version of Loday's conjectural contravariant 
$K$-groups.
\end{theorem}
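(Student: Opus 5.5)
The identity in Theorem~\ref{thm:loday} holds essentially by construction. What needs checking is that every map in it is well defined, so that the bilinear form \eqref{eq:Kpairing} makes sense and the triangle commutes. I would proceed in three steps.

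First, I would confirm that $\ch_n^{\vee}$ is a well-defined homomorphism $K^n_{\mathrm{Kos}}(A)\to HC^n(A)$. By standing assumption $A$ is weakly Adams connected. Corollary~\ref{cor:$K$-equivalence} then gives the isomorphism $K_n(\thick_A(\kfield))\cong K_n(\Adual)$, and Theorem~\ref{thm:JG} applied to $\Adual$ gives $\ch_n:K_n(\Adual)\to HC^-_n(\Adual)$. For the last arrow, Lemma~\ref{lem:comparison}(2) shows $\Adual$ is locally finite, so Theorem~\ref{thm:doublecent} gives a quasi-isomorphism $A\simeq(\Adual)^!$. Applying Proposition~\ref{prop:JM} to the algebra $\Adual$, whose Koszul dual $(\Adual)^!\simeq A$ is locally finite, gives $HC^-_n(\Adual)\cong HC^n((\Adual)^!)$. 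Quasi-isomorphism invariance of cyclic cohomology (the remark on mixed complexes) then identifies the right-hand side with $HC^n(A)$.

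This is the step I expect to be the main obstacle. Proposition~\ref{prop:JM} is stated with the roles of $A$ and $\Adual$ the other way round, so I would have to check that its local finiteness hypothesis is really met by $\Adual$. Here I would try to use the fact that the bar construction of a locally finite algebra dualizes degreewise, which makes the linear-duality step in the sketch of Proposition~\ref{prop:JM} valid.

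Second, I would check that the chain-level pairing $\langle\sum\varphi_jv^j,\sum x_iu^i\rangle=\sum\varphi_i(x_i)$ descends to \eqref{eq:cyclic-pairing}. The key computation is $\langle(b^*+vB^*)\varphi,x\rangle=\pm\langle\varphi,(b+uB)x\rangle$, which follows because $b^*,B^*$ are the transposes of $b,B$ and $v$ is paired with $u$ (degrees $+2$ and $-2$). Coboundaries therefore pair trivially with cycles, and cocycles pair trivially with boundaries. The sums involved are finite because only finitely many $\varphi_j$ contribute in a fixed total degree.

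Third, with all maps well defined, $\langle\xi,\eta\rangle_K$ is defined by \eqref{eq:Kpairing} as exactly the right-hand side. The identity therefore holds tautologically, and it is equivalent to commutativity of the triangle, since the composite along the top and right sides is $(\xi,\eta)\mapsto\langle\ch_n^{\vee}\xi,\ch_n\eta\rangle_{\mathrm{cyc}}$. Bilinearity follows because $\ch_n$ and $\ch_n^{\vee}$ are group homomorphisms and $\langle-,-\rangle_{\mathrm{cyc}}$ is bilinear. As in the remark after Conjecture~\ref{conj:Loday}, the pairing is uniquely determined, which is the sense in which it is "characterized".
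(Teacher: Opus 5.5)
Your proposal is correct and matches the paper. The paper's proof is the one-line observation that the identity is just the definition \eqref{eq:Kpairing} of $\langle-,-\rangle_K$ read through Definition~\ref{def:chdual}, which is exactly your third step. Your first two steps re-verify well-definedness that the paper already treats as settled before the theorem, so they are not needed here; if you keep them, note that $(\Adual)^!\simeq A$ does not by itself make $(\Adual)^!$ locally finite (local finiteness of the underlying bigraded space is not a quasi-isomorphism invariant), so your justification for applying Proposition~\ref{prop:JM} to $\Adual$ is incomplete as written.
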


\begin{proof}
This is immediate from 
Definition~\ref{def:chdual} and 
\eqref{eq:Kpairing}.
\end{proof}

We have thus proved Theorem
\ref{thm:intro-main}.

\begin{remark}
The theorem does \emph{not} claim that 
$K^n_{\mathrm{Kos}}(A)$ is the final or 
universal solution to Loday's problem. 
Rather, it shows that with the thick subcategory 
generated by the augmentation module, derived 
Koszul duality supplies a natural and 
computable candidate.
\end{remark}

\subsection{How far is $\thick_A(\kfield)$ from $\Rep(A)$?}
Loday proposed to define $K^n(A)$
as $K_n$ of the category of finite
dimensional representations of $A$.
Since we now assume that $A$ is a dg algebra, it is better to consider the corresponding derived
categories. That is, instead of $\mathrm{Rep}(A)$,
we consider $\mathrm{D}_{\mathrm{fd}}(A)$,
the thick subcategory of $\mathrm{D}(A)$ 
generated by all dg $A$-modules
whose homology is finite-dimensional.

\begin{proposition}[{\cite[Lemma 7.1(a)]{LPWZ}}]
Let $A$ be a strongly locally finite dg algebra.
Then $\mathrm{thick}_A(k)=\mathrm{D}_{\mathrm{fd}}(A)$.
\end{proposition}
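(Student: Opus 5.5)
We prove the two inclusions $\thick_A(\kfield)\subseteq\D_{\mathrm{fd}}(A)$ and $\D_{\mathrm{fd}}(A)\subseteq\thick_A(\kfield)$ separately.

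\emph{First inclusion.} The module $\kfield$ has one-dimensional homology. The class of dg $A$-modules with finite-dimensional total homology is closed under shifts and direct summands. It is also closed under cones, by the long exact sequence in homology: a middle term between two finite-dimensional terms is finite-dimensional. Hence this class, and so $\D_{\mathrm{fd}}(A)$, contains $\thick_A(\kfield)$.

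\emph{Second inclusion.} Since $\thick_A(\kfield)$ is thick, it suffices to show that every dg module $M$ with $\dim H_\bullet(M)<\infty$ lies in $\thick_A(\kfield)$. We argue by induction on $\dim H_\bullet(M)$, and bigraded components may be handled one at a time.

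\begin{enumerate}[label=\textup{(\roman*)},noitemsep,leftmargin=*,nosep]
\item \emph{Reduction to a minimal model.} Strong local finiteness says $I$ is concentrated in Adams degrees of one sign, with finite-dimensional bihomogeneous pieces bounded in homological degree for each Adams degree. Using this, replace $M$ by a quasi-isomorphic, \emph{finite-dimensional} dg $A$-module $N$. First try a minimal $A_\infty$-model: by homotopy transfer, $H_\bullet(M)$ carries an $A_\infty$-module structure over $A$. By the Adams-grading constraint, the higher operations $m_n$ with $n\ge 2$ involving the augmentation ideal strictly raise or lower the Adams degree. This is the framework of \cite{LPWZ}, which treats $A_\infty$-modules.
\item \emph{Finding a submodule.} On the finite-dimensional $N$, choose the extremal nonzero Adams degree, and within it an extremal homological degree of the homology. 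Because the augmentation ideal $I$ strictly moves the Adams degree in one direction, the subspace of $N$ in that extremal Adams degree is killed by $I$, giving a sub-dg-module. Refine this to a one-dimensional sub-dg-module that is a shift of $\kfield$ and is nonzero in homology: take a cycle representing a homology class in the extremal homological degree, or pass to the minimal model so that the differential on $N$ vanishes.
\item \emph{Induction step.} This gives an exact triangle $\Sigma^i\kfield\to N\to N'$ with $\dim H_\bullet(N')<\dim H_\bullet(N)$. Induction yields $N'\in\thick_A(\kfield)$, and hence $N\in\thick_A(\kfield)$.
\end{enumerate}

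\emph{Main obstacle.} The hard step is (i)–(ii): rigorously producing a finite-dimensional model on which $I$ acts nilpotently via the Adams grading, and making sure the extremal piece is a genuine submodule with a nonzero homology class. Without the Adams sign condition this fails; for example, $k[x,x^{-1}]$-type behaviour gives finite-dimensional modules not built from $\kfield$. Here I would rely on the $A_\infty$ minimal-model machinery of \cite{LPWZ}, cf. the arguments behind \cite[Lemma~7.1]{LPWZ}, and the conditions (a)–(c) in Definition~\ref{def:finite}(1).
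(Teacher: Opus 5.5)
The paper gives no proof of this statement; it only cites \cite[Lemma~7.1(a)]{LPWZ}. Your easy inclusion is fine. Your strategy for the other inclusion is also the right one: peel off a shifted copy of $\kfield$ from the extremal Adams degree and induct on $\dim H_\bullet$.

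However, step (i) has a genuine gap, which you yourself flag and then defer to \cite{LPWZ}. You assert that $M$ can be replaced by a quasi-isomorphic \emph{finite-dimensional dg} module $N$, and nothing justifies this. The Adams pieces of $M$ may be infinite-dimensional. Since $A$ can live in both positive and negative homological degrees, homological truncations need not be submodules. Your fallback, the minimal $A_\infty$-model, is an $A_\infty$-module rather than a dg module. So step (ii) would have to be redone there: strict unitality of the transferred structure, strict sub-$A_\infty$-modules, and triangles in $\D_\infty(A)\simeq\D(A)$. Also, ``bigraded components may be handled one at a time'' is false. $I$ moves the Adams degree, so $M$ does not split as an $A$-module along its components; that is exactly why a d\'evissage is needed.

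No finite-dimensional model is required. Say $I^*_j=0$ for $j\le0$; the other case is symmetric.
Since $d$ has bidegree $(-1,0)$, $M=\bigoplus_j M_j$ as complexes and $H(M)_j=H(M_j)$. Let $j_1$ be the largest $j$ with $H(M)_j\neq0$.
Then $M_{>j_1}=\bigoplus_{j>j_1}M_j$ is an acyclic sub-dg-module, because the action of $A=\kfield\oplus I$ never lowers Adams degree. Hence $M\to\bar M:=M/M_{>j_1}$ is a quasi-isomorphism, and in $\bar M$ the top piece $M_{j_1}$ is annihilated by $I$.
So any cycle $z\in M^i_{j_1}$ which is not a boundary spans a sub-dg-module $\kfield z$, a copy of $\kfield$ shifted by $(i,j_1)$. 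The long exact sequence gives $\dim H(\bar M/\kfield z)=\dim H(M)-1$, and induction finishes.

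This uses only condition (b) of strong local finiteness, not (a) or (c). Note also that $\kfield z$ is shifted in Adams degree, so $\thick_A(\kfield)$ must be understood as closed under Adams shifts. With homological shifts only, it would contain just modules with homology in Adams degree $0$, and the equality would fail.
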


Thus, if we combine this proposition
with Theorem
\ref{thm0:koszul-equivalence}, then we get the
following.

\begin{corollary}\label{cor:thickeqfd}
For a strongly locally finite
dg algebra $A$ (and in particular,
for an Adams connected dg algebra $A$),  the groups 
$K^n_{\mathrm{Kos}}(A)$,
built from (the derived category of)
finite-dimensional 
representations of $A$, satisfy the desired property of Conjecture~\text{\rm \ref{conj:Loday}}.
\end{corollary}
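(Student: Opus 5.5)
The plan is to assemble Corollary~\ref{cor:thickeqfd} from three ingredients already in place: the identification $\thick_A(\kfield)=\D_{\mathrm{fd}}(A)$, the weak equivalence of $K$-theory spectra from Corollary~\ref{cor:$K$-equivalence}, and the pairing of Theorem~\ref{thm:loday}. All the substantive work has been done earlier, so the proof should be a short bookkeeping argument.

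First I reduce to the standing hypothesis of Section~\ref{sec:loday}. By Lemma~\ref{lem:comparison}(1), an Adams connected algebra is strongly locally finite, and a strongly locally finite algebra is weakly Adams connected. Hence every construction of the previous subsections applies to $A$: Theorem~\ref{thm0:koszul-equivalence}(1), Corollary~\ref{cor:$K$-equivalence}, and Proposition~\ref{prop:JM}, the last via Lemma~\ref{lem:comparison}(2), which gives that $\Adual$ is locally finite. In particular, Definitions~\ref{def:Kdual} and~\ref{def:chdual} make sense, and Proposition~\ref{prop:naturality} provides functoriality.

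Next I justify the phrase ``built from finite-dimensional representations''. By \cite[Lemma~7.1(a)]{LPWZ}, stated just above, $\thick_A(\kfield)=\D_{\mathrm{fd}}(A)$ as full subcategories of $\D(A)$. The Waldhausen model $\mathcal M_A(\kfield)$ consists of cofibrant dg modules whose image lies in this subcategory, so it coincides with the analogous model for $\D_{\mathrm{fd}}(A)$. Therefore
\[
K^n_{\mathrm{Kos}}(A)=K_n(\thick_A(\kfield))=K_n(\D_{\mathrm{fd}}(A)),
\]
and by Corollary~\ref{cor:$K$-equivalence} this group is also isomorphic to $K_n(\Adual)$.

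Finally, set $K^n(A):=K^n_{\mathrm{Kos}}(A)$, take $\ch^*:=\ch_n^{\vee}$, and define $\langle-,-\rangle_K$ by \eqref{eq:Kpairing}. Theorem~\ref{thm:loday} then gives exactly the identity \eqref{conjpairing} of Conjecture~\ref{conj:Loday}.

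The only point needing care is the identification of Waldhausen models in the second step. One must check that ``thick subcategory generated by modules with finite-dimensional total homology'' agrees on the nose with the full subcategory used to define $\mathcal M_A(\kfield)$, so that the $K$-theory spectra are literally equal and not merely equivalent. Since both categories are defined as full subcategories of cofibrant objects determined by their images in $\D(A)$, equality of the images settles this.
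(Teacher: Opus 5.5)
Your proposal is correct and follows the paper's route: the paper's proof combines the Lu--Palmieri--Wu--Zhang identification $\thick_A(\kfield)=\D_{\mathrm{fd}}(A)$ with the Koszul-duality equivalence and the (definitional) pairing identity of Theorem~\ref{thm:loday}, which is what you spell out in more detail. One small correction: Definition~\ref{def:chdual} uses the Jones--McCleary isomorphism in the direction $HC^-_n(\Adual)\cong HC^n(A)$, i.e.\ Proposition~\ref{prop:JM} applied to $\Adual$ rather than to $A$, so the correct justification is not Lemma~\ref{lem:comparison}(2) for $A$ but Lemma~\ref{lem:comparison}(3) for $A$ followed by (1) and (2) for $\Adual$ (giving $(\Adual)^!$ locally finite), together with $A\simeq(\Adual)^!$ from Theorem~\ref{thm:doublecent}---all available because $A$ is strongly locally finite.
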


\end{document}